\newcommand{\bs}[1]{\boldsymbol{#1}}
\newcommand{\R}{\mathbb{R}}
\newcommand{\Id}{\mathbf{Id}}
\newtheorem{conj}{Conjecture}
\newtheorem{thm}{Theorem}[section]
\newtheorem{cor}{Corollary}[section]
\newtheorem{lma}{Lemma}[section]
\newtheorem{rmk}{Remark}[section]
\newtheorem{prop}{Proposition}[section]
\begin{document}

\begin{center}

\Large{\bf Squared chaotic random variables: \\ new moment inequalities with applications\\ }

\bigskip

\normalsize{Dominique Malicet\footnote{Pontifical Catholic University of Rio de Janeiro, Br\'esil. Email: malicet@crans.org}, Ivan Nourdin\footnote{Universit\'e du Luxembourg, Luxembourg. Email: ivan.nourdin@uni.lu}, \\ Giovanni Peccati\footnote{Universit\'e du Luxembourg, Luxembourg. Email: giovanni.peccati@gmail.com. GP is partially supported by the grant F1R-MTH-PUL-12PAMP (PAMPAS) at Luxembourg University} and Guillaume Poly\footnote{Universit\'e de Rennes 1, IRMAR, France. Email: guillaume.poly@univ-rennes1.fr}}
\end{center}

\

{\small \noindent {\bf Abstract}: We prove a new family of inequalities involving squares of random variables belonging to the Wiener chaos associated with a given Gaussian field. Our result provides a substantial generalisation, as well as a new analytical proof, of an estimate by Frenkel (2007), and also constitute a natural real counterpart to an inequality established by Arias-de-Reyna (1998) in the framework of complex Gaussian vectors. We further show that our estimates can be used to deduce new lower bounds on homogeneous polynomials, thus partially improving results by Pinasco (2012), as well as to obtain a novel probabilistic representation of the remainder in Hadamard inequality of matrix analysis. \\

\noindent {\bf Key words}: Gaussian fields; Gaussian vectors; Hadamard Inequality; Linearization Constants; Moment Inequalities; Ornstein-Uhlenbeck Semigroup; Polarization Conjecture; U-conjecture; variance inequalities; Wiener Chaos. \\

\section{Introduction and main results}

\subsection{Overview}\label{ss:overview}

For $n\geq 1$, let $\gamma_n$ denote the standard Gaussian measure on $\R^n$, given by $d\gamma_n(x) = (2\pi)^{-n/2}\exp\{ - \|x\|^2/2\}dx$, where, here and for the rest of the paper, $\|\cdot\|$ indicates the Euclidean norm on $\R^n$. In what follows, we shall denote by ${(P_t)}_{t \geq 0}$ the {\it Ornstein-Uhlenbeck semigroup} on
$\R^n$ with infinitesimal generator
\begin {equation} \label{e:ou}
{\cal L} f \, = \,  \Delta f  - \langle x, \nabla f\rangle
   \, = \,  \sum_{i =1}^n  \frac{\partial^2 f}{\partial x_i^2}
     - \sum_{i=1}^n x_i \, \frac{\partial f}{\partial x_i},
\end {equation}
($\mathcal{L}$ acts on smooth functions $f$ as an invariant and symmetric operator with respect to $\gamma_n$.) We denote by $\{H_k : k=0,1,...\}$ the collection of Hermite polynomials on the real line, defined recursively as $H_0 \equiv 1$, and $H_{k+1} =\delta H_k$, where $\delta f(x) := xf(x)-f'(x)$. The family $\{k!^{-1/2}H_k : k=0,1,..\}$ constitutes an orthonormal basis of $L^2(\gamma_1) : = L^2( \R, \mathcal{B}(\R), \gamma_1)$ (see e.g. \cite[Section 1.4]{np-book}).

It is a well-known fact that the spectrum of $\mathcal{L}$ coincides with the set of negative integers, that is,  $\text{Sp}(-\mathcal{L})=\mathbb{N}$. Also, the $k$th eigenspace of $\mathcal{L}$, corresponding to the vector space $\text{Ker}(\mathcal{L}+k\,I)$ (with $I$ the identity operator) and known as the $k$th {\it Wiener chaos} associated with $\gamma_n$, coincides with the span of those polynomial
functions $F(x_1,\ldots,x_n)$ having the form
\begin{equation}\label{explicit}
F(x_1,\ldots,x_n)=\sum_{i_1+i_2+\cdots+i_{n}=k}\alpha(i_1,\cdots,i_{n})\prod_{j=1}^{n} H_{i_j}(x_j),
\end{equation}
for some collection of real weights $\big\{\alpha(i_1,\cdots,i_{n})$\big\}.

\medskip

The principal aim of this paper is to prove the following general inequality involving polynomials of the form \eqref{explicit}.

\begin{thm}\label{t:magicintro} Under the above conventions and notation, fix $d\geq 1$, let $k_1,...,k_d\geq 1$, and consider polynomials $F_i \in {\rm Ker}(\mathcal{L}+k_i\,I)$, $i=1,...,d$. Then,
\begin{equation}\label{magic-conclusion}
\int_{\R^n}  \left(\prod_{i=1}^d F_i^2\right) d\gamma_n
\geq \prod_{i=1}^d\int_{\R^n}  F_i^2d\gamma_n,
\end{equation}
{with equality in \eqref{magic-conclusion} if and only if the $F_i$'s are jointly independent.}
\end{thm}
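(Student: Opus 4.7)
The plan is to reformulate \eqref{magic-conclusion} via Mehler's formula and then to exploit the chaos decomposition. For $\rho\in[0,1]$, let $W, W_1, \dots, W_d$ be $d+1$ independent $\gamma_n$-distributed random vectors and set
\[
X^{(i)}_\rho := \sqrt{\rho}\, W + \sqrt{1-\rho}\, W_i, \qquad i=1,\dots,d.
\]
Each $X^{(i)}_\rho$ has law $\gamma_n$ and the family $(X^{(1)}_\rho,\dots,X^{(d)}_\rho)$ is jointly Gaussian with pairwise cross-covariance $\rho\,\Id_n$. Setting
\[
\Psi(\rho) := \E\Big[\prod_{i=1}^d F_i\bigl(X^{(i)}_\rho\bigr)^{\!2}\Big],
\]
one gets $\Psi(1)=\int\prod_i F_i^2\,d\gamma_n$ (the $X^{(i)}_\rho$ all collapse to $W$) and $\Psi(0)=\prod_i\int F_i^2\,d\gamma_n$ (the $X^{(i)}_\rho$ become independent), so the theorem reduces to $\Psi(1)\ge\Psi(0)$.

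Next, I would expand $\Psi(\rho)$ as a polynomial in $\rho$ using the chaos decomposition of each $F_i^2$. Since $F_i\in{\rm Ker}(\mathcal{L}+k_i\,I)$, the square belongs to $\bigoplus_{l=0}^{k_i}{\rm Ker}(\mathcal{L}+2l\,I)$; write $F_i^2=\sum_{l=0}^{k_i}G_{i,l}$ with $G_{i,l}\in{\rm Ker}(\mathcal{L}+2l\,I)$ and $G_{i,0}=\int F_i^2\,d\gamma_n$. A direct computation using the Gaussian conditioning $X^{(i)}_\rho\mid W\sim \mathcal{N}(\sqrt{\rho}\,W,(1-\rho)\,\Id_n)$ gives $\Psi(\rho)=\int\prod_i P_{t}(F_i^2)\,d\gamma_n$ with $\rho=e^{-2t}$, and since $P_tG_{i,l}=\rho^lG_{i,l}$ we obtain
\[
\Psi(\rho)\;=\;\sum_{m=0}^{K}\rho^m\,c_m,\qquad c_m:=\sum_{\substack{(l_1,\dots,l_d)\\ 0\le l_i\le k_i,\,\sum_i l_i=m}}\int\prod_{i=1}^dG_{i,l_i}\,d\gamma_n,\qquad K:=\sum_ik_i.
\]
In particular $c_0=\prod_i\int F_i^2\,d\gamma_n$, so \eq
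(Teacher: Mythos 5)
Your setup is sound and in fact coincides with the paper's: the function $\Psi(\rho)$ you define is exactly the quantity $\phi(t)=\int_{\R^n}\prod_i P_t(F_i^2)\,d\gamma_n$ with $\rho=e^{-2t}$ that the authors introduce in Step 3 of their proof, and your identification of $\Psi(1)$, $\Psi(0)$ and of $\Psi$ as a polynomial $\sum_m c_m\rho^m$ with $c_0=\prod_i\int F_i^2\,d\gamma_n$ is correct. But the proposal stops exactly where the real work begins: you still need $\Psi(1)\ge\Psi(0)$, i.e.\ $\sum_{m\ge 1}c_m\ge 0$, and the expansion by itself gives no handle on this. Worse, the natural way to finish from where you are --- showing $c_m\ge 0$ term by term --- is false. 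Take $d=3$, $k_1=k_2=k_3=1$, $F_i(x)=\langle v_i,x\rangle$ with unit vectors $v_i$ and correlations $\rho_{ij}=\langle v_i,v_j\rangle$; then $G_{i,1}=F_i^2-1$ and
\[
c_3=\int G_{1,1}G_{2,1}G_{3,1}\,d\gamma_n=8\,\rho_{12}\rho_{13}\rho_{23},
\]
which is negative for suitable admissible correlation matrices. So the inequality holds only for the \emph{sum} of the coefficients, not coefficientwise, and some global mechanism is indispensable.

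The paper supplies that mechanism by proving that $\phi'(t)\le 0$ for every $t\ge 0$ (equivalently, that your $\Psi$ is nondecreasing in $\rho$). The key input is spectral: since $F_1\cdots F_d\in\bigoplus_{k=0}^{r}\mathrm{Ker}(\mathcal{L}+kI)$ with $r=k_1+\cdots+k_d$, one has $\int (F_1\cdots F_d)\,(\mathcal{L}+rI)(F_1\cdots F_d)\,d\gamma_n\ge 0$, which after expanding $\mathcal{L}$ as a diffusion and integrating by parts yields $\sum_i\int \mathcal{L}(F_i^2)\prod_{j\ne i}F_j^2\,d\gamma_n\le 0$; a tensorization trick (applying this inequality on $\R^{n(d+1)}$ to the functions $F_i(e^{-t}x_0+\sqrt{1-e^{-2t}}x_i)$, which are still eigenfunctions) then upgrades it to all $t>0$. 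Nothing of this sort appears in your argument, and without it the claim $\Psi(1)\ge\Psi(0)$ is unproved. The equality case (equality iff the $F_i$ are jointly independent), which is part of the statement, is also not addressed; in the paper it requires a separate induction on $r$ together with the \"Ust\"unel--Zakai/Rosi\'nski--Samorodnitsky characterisations of independence on Wiener chaos.
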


As discussed below, inequality \eqref{magic-conclusion} contains important extensions and generalisations of estimates by Frenkel \cite{F07}, Arias-de-Reyna \cite{Ar98} and Pinasco \cite{Pi12}, connected, respectively, to the so-called real and complex {\it polarization problem} introduced in \cite{bst, rt}, and to lower bounds for products of homogenous polynomials. A discussion of these points is provided in the subsequent Sections \ref{ss:pcintro} and \ref{ss:pinintro}. {In Section \ref{s:u-conj}, we will show that our results are also connected to the (still open) {\it U-conjecture} by Kagan, Linnik and Rao \cite{KLR}: in particular, our findings will allow us to produce a large collection of new examples where this conjecture is valid. As explained below, our findings about the $U$-conjecture do not rely at all on any notion of convexity, and seem to be largely outside the scope of the existing results and techniques in this area (that are essentially based of convex analysis -- see e.g. \cite{bb, H05}).} In Section \ref{s:had}, we will describe a further application of \eqref{magic-conclusion} to a probabilistic proof (with a new explicit remainder) of {\it Hadamard inequality} for the determinant of symmetric positive matrices.

\medskip

Every random object appearing in the sequel is defined on an adequate probability space $(\Omega, \mathcal{F}, P)$, with $E$ denoting mathematical expectation with respect to $P$. In particular, according to the previously introduced notation, if ${\bf g} = (g_1,...,g_n)$ is a centered Gaussian vector with identity covariance matrix, then for every bounded measurable test function $\varphi : \R^n\to \R$ one has that
$$
E[\varphi({\bf g} )] = \int_{\R^n} \varphi(x) d\gamma_n(x).
$$

\subsection{Arias-de-Reyna's and Frenkel's inequalities,\\ and the polarization problem} \label{ss:pcintro}

We will prove in Section \ref{ss:hermiteproof} that Theorem \ref{t:magicintro} contains as a special case the following estimate.

\begin{thm}[Hermite Gaussian product inequality]\label{t:hermite} For $d\geq 2$, let $$(G_1,...,G_d)$$ be a $d$-dimensional {\bf real-valued} centered Gaussian vector whose components have unit variance, and otherwise arbitrary covariance matrix. Then, for every collection of integers $p_1,...,p_d\geq 1$,
\begin{equation}\label{e:hgp}
E[H_{p_1}(G_1)^2 \cdots H_{p_d}(G_d)^2] \geq \prod_{j=1}^d E[H_{p_i}(G_i)^{2}],
\end{equation}
where the set of Hermite polynomials $\{H_p\}$ has been defined in Section \ref{ss:overview}.
\end{thm}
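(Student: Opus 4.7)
The plan is to derive Theorem \ref{t:hermite} as a direct specialization of Theorem \ref{t:magicintro}, by realizing each $G_j$ as a unit linear functional of a standard Gaussian vector and rewriting $H_{p_j}(G_j)$ as an element of the $p_j$th Wiener chaos.

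First I would fix a dimension $n$ large enough (for instance $n=d$) and produce unit vectors $a_1,\ldots,a_d \in \R^n$ such that $\langle a_i, a_j\rangle$ equals $\mathrm{Cov}(G_i,G_j)$ for every $i,j$. This is possible because the covariance matrix $\Sigma$ of $(G_1,\ldots,G_d)$ is symmetric positive semidefinite with $\Sigma_{jj}=1$, so a Cholesky (or spectral) factorization $\Sigma = AA^T$ gives a matrix $A$ whose rows $a_j^T$ satisfy $\|a_j\|^2 = \Sigma_{jj}=1$. Letting $\mathbf{g}\sim\gamma_n$ and $\widetilde G_j := \langle a_j,\mathbf{g}\rangle$, the vector $(\widetilde G_1,\ldots,\widetilde G_d)$ has the same law as $(G_1,\ldots,G_d)$, and so both sides of \eqref{e:hgp} are unchanged if we replace the $G_j$'s by the $\widetilde G_j$'s.

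Next I would set $F_j(x) := H_{p_j}(\langle a_j, x\rangle)$ for $x\in\R^n$ and verify that $F_j \in \mathrm{Ker}(\mathcal{L}+p_j I)$. A direct computation using $\|a_j\|=1$ gives
\begin{equation*}
\mathcal{L}F_j(x) = \Delta F_j(x) - \langle x,\nabla F_j(x)\rangle = H_{p_j}''(\langle a_j,x\rangle) - \langle a_j,x\rangle\, H_{p_j}'(\langle a_j,x\rangle),
\end{equation*}
and the Hermite ODE $H_p''(t) - tH_p'(t) + pH_p(t) = 0$ (which is classical, and in any case follows from the recursion $H_{p+1}=\delta H_p$ stated in the paper) yields $\mathcal{L}F_j = -p_j F_j$, so that $F_j$ belongs to the $p_j$th Wiener chaos associated with $\gamma_n$.

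Finally, I would apply Theorem \ref{t:magicintro} to $F_1,\ldots,F_d$: this gives
\begin{equation*}
\int_{\R^n}\prod_{j=1}^d F_j(x)^2\,d\gamma_n(x) \;\geq\; \prod_{j=1}^d \int_{\R^n} F_j(x)^2\,d\gamma_n(x),
\end{equation*}
which, translated back through $\widetilde G_j = \langle a_j,\mathbf{g}\rangle$, is precisely the desired inequality \eqref{e:hgp}. There is no genuine obstacle in this reduction: the only points that require care are exhibiting the Gram representation of $\Sigma$ by unit vectors and the verification that $H_p(\langle a_j,\cdot\rangle)$ is a chaos eigenfunction, both of which are standard.
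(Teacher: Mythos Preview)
Your proposal is correct and follows essentially the same route as the paper's own proof in Section~\ref{ss:hermiteproof}: represent the covariance as a Gram matrix of unit vectors, realize $G_j$ as $\langle a_j,\mathbf{g}\rangle$, observe that $H_{p_j}(\langle a_j,\cdot\rangle)\in\mathrm{Ker}(\mathcal{L}+p_j I)$, and then invoke Theorem~\ref{t:magicintro}. If anything, you give more detail than the paper, which simply cites the fact that $H_p$ of a unit linear form lies in the $p$th chaos as standard.
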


Relation \eqref{e:hgp} represents a substantial extension of two remarkable inequalities proved, respectively, by J. Arias-de-Reyna \cite[Theorem 3]{Ar98} and P. E. Frenkel \cite[Theorem 2.1]{F07}, that are presented in the next statement.
We recall that $(G_1,...,G_d)$ is a $d$-dimensional complex-valued centered Gaussian vector if their exist $a_1,\ldots,a_d\in\mathbb{C}$ such that $G_k=\langle a_k,Z\rangle$, $k=1,\ldots,d$, where $Z=X+iY$, with $X,Y\sim N_d(0,I_d)$ independent.
\begin{thm}\label{t:af} Let $d\geq 2$ be a fixed integer.
\begin{enumerate}

\item[\rm 1.] {\rm (See \cite{Ar98})} Let $(G_1,...,G_d)$ be a $d$-dimensional {\bf complex-valued} centered Gaussian vector with arbitrary covariance matrix. Then, for every collection of integers $p_1,...,p_d\geq 1$,
\begin{equation}\label{e:adr1}
E[|G_1^{p_1}\cdots G_d^{p_d}|^2] \geq \prod_{j=1}^d E[|G^{p_i}_i|^{2}].
\end{equation}

\item[\rm 2.] {\rm (See \cite{F07})} Let $(G_1,...,G_d)$ be a $d$-dimensional {\bf real-valued} centered Gaussian vector with arbitrary covariance matrix. Then,
\begin{equation}\label{e:f}
E[G^2_1 \cdots G^2_d] \geq \prod_{j=1}^d E[G^2_i].
\end{equation}

\end{enumerate}
\end{thm}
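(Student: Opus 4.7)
The plan is to deduce both parts from the Wiener-chaos tools introduced above: part 2 is immediate from Theorem \ref{t:hermite}, while part 1 requires a short lift from $\mathbb{C}^n$ to $\R^{2n}$ before Theorem \ref{t:magicintro} can be invoked.

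For part 2 (Frenkel's inequality), the route is to specialize Theorem \ref{t:hermite} to $p_1=\cdots=p_d=1$. Since $H_1(x)=x$, this yields $E[G_1^2\cdots G_d^2]\geq 1$ whenever the marginals have unit variance. The general case follows from the trivial rescaling $G_i = \sigma_i \widetilde G_i$ with $\sigma_i^2 = E[G_i^2]$, under which $(\widetilde G_1,\ldots,\widetilde G_d)$ remains a centered Gaussian vector with unit-variance marginals and both sides of \eqref{e:f} get multiplied by the same factor $\prod_i \sigma_i^2$.

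For part 1 (Arias-de-Reyna), write $Z_j = X_j + i Y_j$, so that $(X_1,\ldots,X_n,Y_1,\ldots,Y_n)$ is a standard Gaussian vector on $\R^{2n}$ and each $G_k = \sum_j a_{k,j} Z_j$ becomes a complex-linear combination of these real Gaussians. The decisive step is to identify $G_k^{p_k}$ as an element of the $p_k$-th Wiener chaos on $\R^{2n}$, in the sense that after writing $G_k^{p_k} = A_k + i B_k$ with $A_k, B_k$ real polynomials, both $A_k$ and $B_k$ lie in $\text{Ker}(\mathcal{L} + p_k I)$. This is a short direct computation: from $\partial_{X_j}G_k = a_{k,j}$ and $\partial_{Y_j}G_k = i a_{k,j}$ one obtains the "Cauchy--Riemann cancellation" $(\partial_{X_j}^2 + \partial_{Y_j}^2)G_k^{p_k} = 0$, and the Euler-type identity $\sum_j(X_j\partial_{X_j} + Y_j \partial_{Y_j}) G_k^{p_k} = p_k G_k^{p_k}$ (obtained from $\sum_j a_{k,j}(X_j + i Y_j) = G_k$) yields $\mathcal{L}G_k^{p_k} = -p_k G_k^{p_k}$; the reality of $\mathcal{L}$ then separates real and imaginary parts.

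Once this is in place, the inequality follows from a binomial expansion: since $|G_k^{p_k}|^2 = A_k^2 + B_k^2$,
\[
\prod_{k=1}^d|G_k^{p_k}|^2 \;=\; \sum_{\epsilon\in\{0,1\}^d}\,\prod_{k=1}^d C_{k,\epsilon_k}^2,\qquad C_{k,0}:=A_k,\ C_{k,1}:=B_k.
\]
For each $\epsilon$, the factor $C_{k,\epsilon_k}$ lies in the $p_k$-th Wiener chaos (with possibly different orders across $k$, which Theorem \ref{t:magicintro} explicitly allows), so Theorem \ref{t:magicintro} gives $E[\prod_k C_{k,\epsilon_k}^2]\geq \prod_k E[C_{k,\epsilon_k}^2]$. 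Summing over $\epsilon$ and using $E[A_k^2]+E[B_k^2]=E[|G_k^{p_k}|^2]$ yields \eqref{e:adr1}. The main obstacle is precisely the chaos identification above: without it, one would expand $|G_k^{p_k}|^2$ into components of mixed chaotic orders, to which Theorem \ref{t:magicintro} does not apply. The computation itself is elementary, but it is the conceptual bridge between the complex statement \eqref{e:adr1} and the real framework of Theorem \ref{t:magicintro}.
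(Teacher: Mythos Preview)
Your treatment of part 2 coincides with what the paper does: it explicitly remarks that \eqref{e:f} is the case $p_1=\cdots=p_d=1$ of Theorem~\ref{t:hermite} (since $H_1(x)=x$), and the Corollary following Theorem~\ref{quivabien} records the same consequence directly from \eqref{t=0}. So for part~2 you and the paper agree.

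For part~1 the situation is different. The paper does \emph{not} deduce \eqref{e:adr1} from its own machinery; it simply quotes the result from \cite{Ar98} and observes that \eqref{e:hgp} is the natural real analogue, given that monomials $z\mapsto z^n$ are orthogonal in $L^2(\gamma_{\mathbb C})$ just as Hermite polynomials are in $L^2(\gamma_1)$. Your argument goes further: by realising $G_k^{p_k}$ as a harmonic, degree-$p_k$ homogeneous polynomial in the $2n$ real coordinates (so that $\mathcal L G_k^{p_k}=-p_k G_k^{p_k}$, hence $\mathrm{Re}\,G_k^{p_k},\mathrm{Im}\,G_k^{p_k}\in\mathrm{Ker}(\mathcal L+p_kI)$), and then expanding $\prod_k|G_k^{p_k}|^2=\prod_k(A_k^2+B_k^2)$ over $\{0,1\}^d$, you reduce \eqref{e:adr1} to $2^d$ applications of Theorem~\ref{t:magicintro} followed by the factorisation $\sum_\epsilon\prod_k E[C_{k,\epsilon_k}^2]=\prod_k(E[A_k^2]+E[B_k^2])$. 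This is correct and self-contained, and it shows something the paper does not state: Arias-de-Reyna's complex inequality is itself a consequence of Theorem~\ref{t:magicintro}. The paper's route (citation) is of course shorter, but yours unifies both parts of Theorem~\ref{t:af} under the single real-chaos estimate \eqref{magic-conclusion}.
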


Note that \eqref{e:f} corresponds to \eqref{e:hgp} for the special choice of exponents $p_1=\cdots = p_d=1$: it is therefore remarkable that our general result \eqref{magic-conclusion} implicitly provides a new intrinsic analytical proof of \eqref{e:f}, that does not rely on the combinatorial/algebraic tools exploited in \cite{F07}. Also, it is a classical fact (see e.g. \cite[Proposition 1]{Ar98}) that the monomials $x \mapsto x^n : \mathbb{C} \to \mathbb{C}$, $n\geq 0$, constitute a complete orthogonal system for the space $L^2(\gamma_\mathbb{C}):=L^2(\mathbb{C}, \mathcal{B}(\mathbb{C}), \gamma_\mathbb{C})$, where $\gamma_\mathbb{C}$ stands for the standard Gaussian measure on $\mathbb{C}$, in such a way that  --- owing to the fact that $\{H_k : k=0,1,..\}$ is a complete orthogonal system for $\gamma_1$ --- relation \eqref{e:hgp} can be regarded as a natural real counterpart of  \eqref{e:adr1}.

As shown in \cite{Ar98, F07} and further discussed e.g. in \cite{ambrus, lw, Pi12}, the two estimates \eqref{e:adr1} and \eqref{e:f} are intimately connected to {\it polarization problems} in the framework of Hilbert spaces. Indeed (by a standard use of polar coordinates -- see \cite[Theorem 4]{Ar98}) Part 1 of Theorem \ref{t:af} actually implies the following general solution to the so-called {\it complex polarization problem}.

\begin{thm}[Complex Polarization Problem, see \cite{Ar98}]\label{t:adr2} For any $d\geq 2$ and any collection $x_1,...,x_d$ of unit vectors in $\mathbb{C}^d$, there exists a unit vector $v\in \mathbb{C}^d$ such that
\begin{equation}\label{e:j}
| \langle v, x_1\rangle \cdots \langle v, x_d\rangle |\geq d^{-d/2},
\end{equation}
where $\langle \cdot, \cdot \rangle$ indicates the scalar product in $\mathbb{C}^d$. As a consequence, for $d\geq 2$ and for every {\bf complex} Hilbert space $\mathcal{H}$ of dimension at least $d$, one has that $c_d(\mathcal{H}) = d^{d/2}$, where the $d$th {\bf linear polarization constant} is defined as
\begin{eqnarray*}
&&\!\!\!\!\!\!\!\!c_d(\mathcal{H}): =\\
&&\!\!\! \!\!\!\!\inf \left\{\! M>0\! :\! \forall u_1,...,u_d \in S(\mathcal{H}), \exists v\in S(\mathcal{H}) :|\langle u_1,v\rangle_\mathcal{H}\cdots \langle u_d,v\rangle_\mathcal{H}|\geq M^{-1}\!\right\},
\end{eqnarray*}
and $S(\mathcal{H}) := \{u \in \mathcal{H} : \|u\|_\mathcal{H} = 1\}$.
\end{thm}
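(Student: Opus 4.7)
My plan is to derive Theorem \ref{t:adr2} from Theorem \ref{t:af} Part 1 by applying the latter to \emph{arbitrary integer powers} of the polynomial $p(z) := \prod_{i=1}^d \langle z, x_i\rangle$ and then passing from the resulting spherical $L^{2k}$-bounds to the supremum via the limit $k\to\infty$.

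Fixing unit vectors $x_1, \ldots, x_d \in S(\mathbb{C}^d)$, I set $G_i := \langle Z, x_i\rangle$ for a standard complex Gaussian $Z$ in $\mathbb{C}^d$, so that each $|G_i|^2$ is exponential of mean $2$. For every integer $k \geq 1$, I apply Theorem \ref{t:af} Part 1 with exponents $p_1 = \cdots = p_d = k$:
\begin{equation*}
E\bigl[|p(Z)|^{2k}\bigr] \,=\, E\bigl[|G_1^k \cdots G_d^k|^2\bigr] \,\geq\, \prod_{i=1}^d E\bigl[|G_i|^{2k}\bigr] \,=\, \bigl(2^k k!\bigr)^d.
\end{equation*}
Next, I introduce polar coordinates: write $Z = RV$ with $R := \|Z\|$ and $V := Z/R$ uniformly distributed on the sphere $S(\mathbb{C}^d)$, independent of $R$. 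Since $p$ is homogeneous of degree $d$ we have $|p(Z)|^{2k} = R^{2kd}\,|p(V)|^{2k}$, and a direct $\chi^2_{2d}$-moment computation gives $E[R^{2kd}] = 2^{kd}(kd + d - 1)!/(d-1)!$. Dividing yields the spherical $L^{2k}$-bound
\begin{equation*}
E\bigl[|p(V)|^{2k}\bigr] \,\geq\, \frac{(d-1)!\,(k!)^d}{(kd + d - 1)!}.
\end{equation*}

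Taking $k$-th roots on both sides and letting $k \to \infty$, the left-hand side $\|p(V)\|_{L^{2k}}^{2}$ converges to $\sup_{v \in S(\mathbb{C}^d)} |p(v)|^2$ by compactness and continuity of $|p|$, while a direct application of Stirling's formula identifies $\lim_{k \to \infty}[(k!)^d/(kd+d-1)!]^{1/k} = d^{-d}$. This yields $\sup_v |p(v)| \geq d^{-d/2}$, which is precisely~(\ref{e:j}). The equality $c_d(\mathcal{H}) = d^{d/2}$ then follows in two short steps: the upper bound $c_d(\mathcal{H}) \leq d^{d/2}$ is obtained by applying (\ref{e:j}) inside the $d$-dimensional subspace of $\mathcal{H}$ spanned by the given vectors (completing to dimension $d$ if necessary, which is possible since $\dim\mathcal{H}\geq d$); and the matching lower bound $c_d(\mathcal{H}) \geq d^{d/2}$ is reached by testing with any orthonormal family $u_1, \ldots, u_d \in S(\mathcal{H})$, for which Bessel's inequality and AM--GM give $|\langle u_1, v\rangle \cdots \langle u_d, v\rangle|^2 \leq \bigl(d^{-1}\sum_i |\langle u_i, v\rangle|^2\bigr)^d \leq d^{-d}$, with equality attained at $v = d^{-1/2}(u_1 + \cdots + u_d)$.

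The main technical obstacle I foresee is the Stirling asymptotic: one must verify that $[(k!)^d/(kd+d-1)!]^{1/k}$ converges to \emph{exactly} $d^{-d}$, with no stray polynomial or exponential prefactor corrupting the constant. This hinges on a precise matching of the leading $dk\log k$ term in $\log (k!)^d$ against the $kd\log(kd)$ term in $\log (kd+d-1)!$, which leaves the desired $-kd\log d$ at the relevant order. Once this cancellation is carried out carefully, the rest of the argument is elementary.
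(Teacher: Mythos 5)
Your proposal is correct and follows essentially the same route the paper takes: the paper itself only cites Arias-de-Reyna for Theorem \ref{t:adr2} (``by a standard use of polar coordinates''), but its own proof of the real analogue in Section \ref{ss:conj} (Conjecture \ref{c:rp2} $\Longrightarrow$ Conjecture \ref{c:rp1}) is precisely your argument --- polar decomposition, the product inequality applied with growing exponents, $L^{2k}\to L^\infty$ on the sphere, Stirling, and the orthonormal-family/AM--GM/Bessel computation for the matching lower bound on $c_d(\mathcal{H})$. All your intermediate computations ($E[|G_i|^{2k}]=2^kk!$, $E[R^{2kd}]=2^{kd}(kd+d-1)!/(d-1)!$, and the limit $[(k!)^d/(kd+d-1)!]^{1/k}\to d^{-d}$) check out.
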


A result of Pinasco \cite[Theorem 5.3]{Pi12} further implies that one has equality in \eqref{e:j} if and only if the vectors $x_1,...,x_d$ are orthonormal; also, it is important to remark that the inequality \eqref{e:j} follows from K. Ball's solution of the complex plank problem --- see \cite{ball1}.
The problem of explicitly computing linear polarization constants associated with real or complex Banach spaces dates back to the seminal papers \cite{bst, rt}. We refer the reader to the dissertation \cite{ambrus} for an overview of this domain of research up to the year 2009.

It is interesting to notice that the following real version of Theorem \ref{t:adr2} is still an open problem.

\begin{conj}[Real Polarization Problem]\label{c:rp1} For any $d\geq 2$, and any collection $x_1,...,x_d$ of unit vectors in $\mathbb{R}^d$, there exists a unit vector $v\in \mathbb{R}^d$ such that \eqref{e:j} holds. As a consequence, for $d\geq 2$ and for every {\bf real} Hilbert space $\mathcal{H}$ of dimension at least $d$, one has that $c_d(\mathcal{H}) = d^{d/2}$.
\end{conj}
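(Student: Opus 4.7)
The plan is to attempt to derive Conjecture \ref{c:rp1} from Theorem \ref{t:magicintro} by adapting the Gaussian-to-sphere reduction that Arias-de-Reyna \cite{Ar98} used in the complex setting (see Theorem \ref{t:adr2}), now powered by our new real inequality. Fix unit vectors $x_1,\ldots,x_d\in\R^d$ and let $\mathbf{g}$ be a standard Gaussian vector on $\R^d$. The random variables $G_i:=\langle\mathbf{g},x_i\rangle$ are jointly Gaussian with unit variances (the covariance matrix being the Gram matrix of the $x_i$'s), so Theorem \ref{t:hermite} applied with $p_1=\cdots=p_d=1$---equivalently, Frenkel's inequality \eqref{e:f}---gives
\[
E\!\left[\prod_{i=1}^{d} G_i^{2}\right]\;\geq\;\prod_{i=1}^d E[G_i^2]\;=\;1.
\]

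Next, I would decompose $\mathbf{g}=R\,\Theta$ into radial and angular parts, with $R=\|\mathbf{g}\|$ and $\Theta=\mathbf{g}/\|\mathbf{g}\|$ independent, the latter uniformly distributed on $S^{d-1}:=\{v\in\R^d:\|v\|=1\}$. Substituting $G_i=R\,\langle\Theta,x_i\rangle$ and using independence of $R$ and $\Theta$ yields
\[
E[R^{2d}]\cdot \int_{S^{d-1}}\prod_{i=1}^d\langle\theta,x_i\rangle^{2}\,d\sigma(\theta)\;\geq\;1,
\]
where $\sigma$ denotes the normalized surface measure on $S^{d-1}$. Since the maximum over $\theta$ is at least the $\sigma$-average, there exists $v\in S^{d-1}$ such that
\[
\prod_{i=1}^d|\langle v,x_i\rangle|^{2}\;\geq\;\frac{1}{E[R^{2d}]}.
\]

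The main obstacle is the explicit value $E[R^{2d}]=d(d+2)(d+4)\cdots(3d-2)$, which strictly exceeds $d^d$ for every $d\geq 2$. Consequently, this direct strategy only produces a lower bound of the form $c_d\cdot d^{-d/2}$ with $c_d<1$, falling short of the conjectured sharp constant. In the complex analogue the radial overshoot is plugged using plurisubharmonic tools tied to the holomorphicity of $Z\mapsto\prod_i\langle Z,x_i\rangle$, a feature absent in the real category. Closing the gap in the real case presumably requires either a substantially sharper application of Theorem \ref{t:magicintro}---for instance by replacing the linear test functions $\langle\cdot,x_i\rangle$ with higher-chaos elements engineered to cancel the Gaussian's radial excess---or a genuinely new idea outside the scope of the present moment inequalities, which is the fundamental reason the conjecture remains open.
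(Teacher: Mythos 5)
You have not proved the statement, but that is the correct outcome: the statement is Conjecture \ref{c:rp1}, an open problem that the paper itself does not resolve (it is known only for $d\leq 5$, by Pappas and R\'ev\'esz \cite{pr}). Your reduction is sound as far as it goes, and your diagnosis of the obstruction matches the paper's own discussion exactly: applying Frenkel's inequality \eqref{e:f} together with the radial decomposition $\mathbf{g}=R\Theta$ yields only
\[
\sup_{v\in S^{d-1}}\prod_{i=1}^d|\langle v,x_i\rangle|^2\;\geq\;\frac{1}{E[R^{2d}]}=\frac{1}{d(d+2)\cdots(3d-2)},
\]
which is precisely the bound \eqref{e:fuf} with the constant $(1.91\,d)^{-d/2}$, strictly weaker than the conjectured $d^{-d/2}$.

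Where your account can be sharpened is in identifying how the radial overshoot \emph{would} be cancelled. The paper's Section \ref{ss:conj} proves the implication Conjecture \ref{c:rp2} $\Longrightarrow$ Conjecture \ref{c:rp1} by exactly the mechanism you gesture at: one applies the (conjectural) inequality $E[G_1^{2q}\cdots G_d^{2q}]\geq\prod_i E[G_i^{2q}]=((2q-1)!!)^d$ for all $q$, obtains
\[
\sup_{v\in S^{d-1}}\Bigl|\prod_{i=1}^d\langle v,x_i\rangle\Bigr|\;\geq\;\Bigl(\tfrac{1}{E[R^{2dq}]}\Bigr)^{1/2q}\,((2q-1)!!)^{d/2q},
\]
and lets $q\to\infty$; Stirling's formula shows the right-hand side converges to $d^{-d/2}$, so the sharp constant is recovered in the limit. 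Thus the missing ingredient is not ``higher-chaos elements engineered to cancel the radial excess'' in any structural sense, but simply the validity of \eqref{e:fc} for all $m\geq 1$ --- and the paper's Hermite inequality \eqref{e:hgp} does not yield \eqref{e:fc}, because the monomials $x\mapsto x^{2m}$ are not orthogonal in $L^2(\gamma_1)$ and so $G_i^{2m}$ is not $H_{p}(G_i)^2$ for any single $p$. If you want to present something in this direction, the honest statement to prove is that implication, together with the averaged substitute in Proposition \ref{p:1}.
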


In \cite{pr}, it is proved that the Conjecture \ref{c:rp1} is true for $d\leq 5$: this result notwithstanding,  the remaining cases are still unsolved. In \cite[Section 2]{F07}, P. E. Frenkel has shown that Conjecture \ref{c:rp1} would be implied by the solution of the following open problem, that represents another natural real counterpart to \eqref{e:adr1} (see also \cite[Conjecture 4.1]{lw}).

\begin{conj}[Gaussian Product Conjecture]\label{c:rp2}  For every $d\geq 2$, every $d$-dimensional {\bf real-valued} centered Gaussian vector $(G_1,...,G_d)$ and every integer $m\geq 1$,
\begin{equation}\label{e:fc}
E[G^{2m}_1 \cdots G^{2m}_d] \geq \prod_{i=1}^d E[G^{2m}_i].
\end{equation}
\end{conj}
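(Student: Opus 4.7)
The plan is to reduce the conjecture to an application of Theorem \ref{t:magicintro} via the Hermite expansion of the monomial $x^{2m}$. Assuming without loss of generality that $E[G_i^2]=1$ for every $i$, one has
$$x^{2m} = \sum_{j=0}^m \alpha_j^{(m)}\, H_{2j}(x), \qquad \alpha_j^{(m)} = \binom{2m}{2j}(2m-2j-1)!! > 0,$$
with $\alpha_0^{(m)} = (2m-1)!! = E[G_i^{2m}]$. Expanding $\prod_i G_i^{2m}$ along this basis and peeling off the $(0,\ldots,0)$ multi-index, Conjecture \ref{c:rp2} is equivalent to
$$\sum_{(j_1,\ldots,j_d)\neq(0,\ldots,0)} \prod_{i=1}^d \alpha_{j_i}^{(m)}\, E\!\left[\prod_{i=1}^d H_{2j_i}(G_i)\right] \geq 0.$$
In contrast with the squared moments $E[\prod_i H_{2j_i}(G_i)^2]$ covered by Theorem \ref{t:hermite}, the mixed Wick correlators $E[\prod_i H_{2j_i}(G_i)]$ can be of either sign; this is the reason Theorem \ref{t:hermite} does not entail the conjecture through a one-line expansion.

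To bypass this obstruction I would try to \emph{complete the square}. Since $G_i^m = \sum_{k=0}^m c_k^{(m)} H_k(G_i)$ with $c_k^{(m)}\geq 0$, one has $\prod_i G_i^{2m} = (\prod_i G_i^m)^2$, and projecting $\prod_i G_i^m$ onto Wiener chaoses yields
$$E\!\left[\prod_{i=1}^d G_i^{2m}\right] = \sum_{N\geq 0} E\!\left[\Pi_N\!\left(\prod_{i=1}^d G_i^m\right)^{\!2}\right],$$
a sum of nonnegative terms. The idea is to regroup this sum so that each chaos component $\Pi_N(\prod_i G_i^m)$ is written in a form amenable to Theorem \ref{t:magicintro} (i.e.\ as a product of chaos elements of fixed degrees belonging to suitably chosen disjoint Wiener subspaces), and to identify $\prod_i E[G_i^{2m}]$ as the dominant contribution. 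A complementary avenue is covariance interpolation: setting $\Sigma_t = (1-t)\,\mathrm{diag}(\Sigma) + t\,\Sigma$ and $\phi(t) = E_{\Sigma_t}[\prod_i G_i^{2m}]$, Stein's identity gives
$$\phi'(t) = (2m)^2 \sum_{i<j} \Sigma_{ij}\, E_{\Sigma_t}\!\left[G_i^{2m-1}G_j^{2m-1}\!\!\prod_{k\neq i,j}G_k^{2m}\right],$$
and one would aim to prove $\phi'(t)\geq 0$ on $[0,1]$, so that $\phi(1)\geq\phi(0) = \prod_i E[G_i^{2m}]$.

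The hardest step in either approach is controlling signs: the mixed Hermite correlators and the $(2m-1,2m-1)$-moments can be negative when $\Sigma$ has both positive and negative off-diagonal entries. A natural preliminary reduction is to flip signs of individual $G_i$'s, which leaves every $G_i^{2m}$ unchanged, so as to make as many pairwise covariances nonnegative as possible; this reduction is obstructed precisely by \emph{frustrated} configurations, i.e.\ when the sign graph of $\Sigma$ contains a cycle with an odd number of negative edges, and a genuinely new idea seems required to treat the general case. A fallback strategy is induction on $d$: the base case $d=2$ reduces to a univariate polynomial inequality in the correlation parameter $\rho$ that can be verified directly, and a conditioning argument on $G_d$ reduces $d$ to $d-1$; however, the conditional Gaussian vector is no longer centred and carries polynomial factors that do not fit immediately into the framework of Theorem \ref{t:magicintro}, which is likely the main obstacle to completing this route.
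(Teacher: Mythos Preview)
The statement you are attempting to prove is labelled in the paper as a \emph{Conjecture}, not a theorem: the paper does not contain a proof of \eqref{e:fc}. Immediately after stating it, the authors write that ``Conjecture \ref{c:rp2} is only known for $m=1$ and any $d\geq 2$ \ldots\ and for $d=2$ and any $m\geq 1$ \ldots\ It is open in the remaining cases.'' So there is no ``paper's own proof'' to compare against; the only thing the paper proves in this direction is the Hermite-squared inequality \eqref{e:hgp}, which you correctly identify as insufficient because the mixed correlators $E[\prod_i H_{2j_i}(G_i)]$ have no definite sign.

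Your proposal is accordingly not a proof but a survey of three attack lines (Hermite expansion with positive coefficients, chaos decomposition of $\prod_i G_i^{m}$, and covariance interpolation via Price/Stein), together with an induction-on-$d$ fallback. Each of the obstructions you flag is genuine and is precisely why the problem is open. In the interpolation approach, $\phi'(t)$ is a linear combination with coefficients $\Sigma_{ij}$ of moments that can themselves be negative, and the sign-flipping reduction cannot remove odd negative cycles in the correlation graph; this frustration phenomenon is exactly the wall that blocks a monotonicity argument in general. In the chaos-decomposition approach, writing $E[\prod_i G_i^{2m}]=\sum_N \|\Pi_N(\prod_i G_i^{m})\|_2^2$ gives positivity of the whole sum trivially, but you then need to \emph{extract} the specific constant $\prod_i(2m-1)!!$ as a lower bound; there is no evident way to dominate each chaos norm from below by the right combinatorial pieces using Theorem~\ref{t:magicintro}, since $\Pi_N(\prod_i G_i^{m})$ is not itself a product of chaos elements. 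The induction route fails for the reason you state: conditioning on $G_d$ produces a non-centred Gaussian vector, and the conjecture does not even make sense (nor is it true in the obvious reformulation) for shifted Gaussians.

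In short: your Hermite-coefficient computations and the reformulation after peeling off the $(0,\ldots,0)$ multi-index are correct, but none of the sketched routes closes, and you acknowledge this yourself. A completed proof along any of these lines would resolve an open problem, not merely reproduce the paper.
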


For the sake of completeness, we will present a short proof of the implication Conjecture \ref{c:rp2} $\Longrightarrow$ Conjecture \ref{c:rp1} in Section \ref{ss:conj}. Conjecture \ref{c:rp2} is only known for $m=1$ and any $d\geq 2$ (this corresponds to Frenkel's inequality \eqref{e:f}) and for $d=2$ and any $m\geq 1$ (as it can be easily shown by expanding $x\mapsto x^{2m}$ in Hermite polynomials; {see also \cite[Theorem 6]{hu}}). It is open in the remaining cases. The main difficulty in proving \eqref{e:fc} seems to be that that the monomials $x \mapsto x^n : \mathbb{R} \to \mathbb{R}$ do not constitute an orthogonal system in $L^2(\gamma_1)$. See also Conjecture 1.5 in \cite{F07} for an algebraic reformulation of Conjecture \ref{c:rp2} in terms of hafnians of block matrices. As shown in \cite{F07}, relation \eqref{e:f} yields the following estimate: for every $d\geq 2$ and every collection vectors $x_1,...,x_d\in \R^d$,
\begin{equation}\label{e:fuf}
\sup_{v\in S^{d-1}} | \langle v, x_1\rangle \cdots \langle v, x_d\rangle |\geq\frac{1}{(1.91 d)^{d/2}},
\end{equation}
where $S^{d-1} := \{ x\in \R^d, \, \|x\|=1\}$. In particular, for every {\bf real} Hilbert space $\mathcal{H}$ of dimension at least $d$, one has that
$$d^{d/2} \leq c_d(\mathcal{H}) \leq \sqrt{d(d+2)(d+4)\dots (3d-2)} < (1.91)^{d/2} d^{d/2},$$
which is, for the time being, the best available estimate on the $d$th linearization constant associated with a real Hilbert space.

Unfortunately, our estimate \eqref{e:hgp} does not allow to directly deduce a proof of \eqref{e:f}, but only to infer some {\it averaged} versions of both Conjectures \ref{c:rp1} and \ref{c:rp2}. For instance, using the elementary relation $x^4 +1= 2x^2+ (x^2-1)^2 = 2H_1(x)^2+H_2(x)^2$, one deduces the following novel averaged version of \eqref{e:fc} in the case $m=2$.

\begin{prop}\label{p:1} Fix $d\geq 2$ and write $[d] := \{1,...,d\}$. Then, for every $d$-dimensional {\bf real-valued} centered Gaussian vector $(G_1,...,G_d)$ whose entries have unit variance, one has that
$$
\sum_{ \{i_1,...,i_k\}\subseteq [d]}E[G_{i_1}^4\cdots G_{i_k}^4]\geq \sum_{\{i_1,...,i_k\}\subseteq [d]}E[G_{i_1}^4]\cdots E[G_{i_k}^4]
$$

\end{prop}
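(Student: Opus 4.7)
The plan is to use the decomposition $x^4 = 2H_1(x)^2 + H_2(x)^2 - 1$ given in the statement, expand each product $\prod_{i \in I} G_i^4$ as a signed combination of Hermite products, and then exploit the averaging over all subsets $I \subseteq [d]$ to make the sign cancellations take place \emph{before} invoking Theorem \ref{t:hermite}. Distributing the three-term identity yields, for every $I \subseteq [d]$,
\begin{equation*}
\prod_{i \in I} G_i^4 \;=\; \sum_{I = A \sqcup B \sqcup C} 2^{|A|}(-1)^{|C|} \prod_{i \in A} H_1(G_i)^2 \prod_{i \in B} H_2(G_i)^2,
\end{equation*}
where the sum runs over ordered partitions of $I$ into three (possibly empty) disjoint pieces.

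Summing over $I \subseteq [d]$ and swapping the orders of summation transforms the double sum into a single sum over triples of pairwise disjoint subsets of $[d]$, and the inner sum $\sum_{C \subseteq [d] \setminus (A \cup B)}(-1)^{|C|}$ vanishes unless $A \cup B = [d]$. After taking expectations this leaves the identity
\begin{equation*}
\sum_{I \subseteq [d]} E\!\left[\prod_{i \in I} G_i^4 \right] \;=\; \sum_{A \subseteq [d]} 2^{|A|}\, E\!\left[\prod_{i \in A} H_1(G_i)^2 \prod_{i \in [d] \setminus A} H_2(G_i)^2 \right].
\end{equation*}

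At this point I would invoke Theorem \ref{t:hermite} on each expectation on the right, and use $E[H_1(G_i)^2] = 1$ and $E[H_2(G_i)^2] = 2$ to bound each such term from below by $2^{d - |A|}$. This gives
\begin{equation*}
\sum_{I \subseteq [d]} E\!\left[\prod_{i \in I} G_i^4 \right] \;\geq\; \sum_{A \subseteq [d]} 2^{|A|}\cdot 2^{d-|A|} \;=\; 4^d \;=\; \sum_{I \subseteq [d]} 3^{|I|} \;=\; \sum_{I \subseteq [d]} \prod_{i \in I} E[G_i^4],
\end{equation*}
where I used $E[G_i^4] = 3$. This is exactly the desired inequality (modulo the inessential $I = \emptyset$ term, which equals $1$ on both sides).

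The main (in fact the only) obstacle in this approach is the sign $(-1)^{|C|}$ entering the Hermite expansion of $x^4$: a term-by-term use of Theorem \ref{t:hermite} fails on summands with odd $|C|$, as it would require an inequality in the wrong direction. The averaging over $I \subseteq [d]$ is precisely the device that annihilates those obstructing terms, which also explains why \eqref{magic-conclusion} can only be pushed to yield an \emph{averaged} version of Conjecture \ref{c:rp2} for $m=2$, rather than the pointwise statement itself.
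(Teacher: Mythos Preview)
Your argument is correct and follows the same route as the paper's (which only records the key identity $x^4+1=2H_1(x)^2+H_2(x)^2$ and leaves the rest implicit). The one cosmetic difference is that you write $x^4=2H_1(x)^2+H_2(x)^2-1$, expand each $\prod_{i\in I}G_i^4$ into a signed sum, and then use the averaging over $I\subseteq[d]$ to kill the $(-1)^{|C|}$ terms; the paper's formulation avoids introducing signs at all by noting directly that
\[
\sum_{I\subseteq[d]}\prod_{i\in I}G_i^4=\prod_{i=1}^d(G_i^4+1)=\prod_{i=1}^d\big(2H_1(G_i)^2+H_2(G_i)^2\big)=\sum_{A\subseteq[d]}2^{|A|}\prod_{i\in A}H_1(G_i)^2\prod_{i\notin A}H_2(G_i)^2,
\]
which is precisely the identity you obtain after your cancellation step. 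Either way, one then applies Theorem~\ref{t:hermite} term by term to get the lower bound $4^d=\sum_{I\subseteq[d]}3^{|I|}$.
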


As an illustration, in the case $d=3$ one obtains that, for every real centered Gaussian vector $(G_1,G_2,G_3)$ whose entries have unit variance,
\begin{eqnarray*}
&&E[G_1^4G_2^4G_3^4]+E[G_1^4G_2^4]+E[G_2^4G_3^4]+E[G_1^4G_3^4]\\
&&\quad \geq  E[G_1^4]E[G_2^4]E[G_3^4]+E[G_1^4]E[G_2^4]+E[G_2^4]E[G_3^4]+E[G_1^4]E[G_3^4]=54.
\end{eqnarray*}

%
%
%
%

In the next section, we will show that our inequality \eqref{magic-conclusion} contains important improvements of the estimates for multivariate real homogeneous polynomials proved by Pinasco in \cite{Pi12}

\subsection{New lower bounds on homogeneous polynomials}\label{ss:pinintro}
Let $(F_1,\cdots,F_d)$ be a $d$-uple of real-valued {\bf homogeneous} polynomials on $\R^n$. We assume that, for every $i=1,...,d$, there exists $k_i\geq 1$ such that $F_i \in {\rm Ker}(\mathcal{L}+k_i\, I)$. Due to homogeneity and representation (\ref{explicit}), this implies that the $F_i$'s have the specific form
$$
\left\{
\begin{array}{ccc}
F_1(x_1,\cdots,x_n)&=&\sum\limits_{i_1<i_2<\cdots<i_{k_1}} a^{(1)}_{i_1,\cdots,i_{k_1}} x_{i_1}\cdots x_{i_{k_1}}\\
\vdots&&\vdots\\
F_d(x_1,\cdots,x_n)&=&\sum\limits_{i_1<i_2<\cdots<i_{k_d}} a^{(d)}_{i_1,\cdots,i_{k_d}}x_{i_1}\cdots x_{i_{k_d}}\\
\end{array}
\right.,
$$
for some collection of real coefficients $\{a^{(1)}_{\bullet}, ..., a^{(d)}_{\bullet} \}$. We further assume that $\int_{\R^n} F_i^2 d\gamma_n=1$ for all $i\in\{1,\cdots,d\}$ and we write
\begin{eqnarray*}
&& \mathcal{S}_i=\sup_{x = (x_1,...,x_n)\in {S}^{n-1}} |F_i(x_1,\cdots,x_n)| \\
&&\mathcal{S}=\sup_{x = (x_1,...,x_n) \in {S}^{n-1}} \prod_{i=1}^d |F_i(x_1,\cdots,x_n)|.
\end{eqnarray*}

In an important contribution, Pinasco \cite[Corollary 4.6]{Pi12} has shown the following estimate
\begin{equation}\label{e:pinocchio}
\mathcal{S} \sqrt{ \frac{2^{K-2} K^K}{k_1^{k_1}\cdots k_d^{k_d}}} \geq \prod_{i=1}^d \mathcal{S}_i,
\end{equation}
where $K = k_1+\ldots + k_d$. We will prove in Section \ref{ss:pinocchio} that our main estimate \eqref{magic-conclusion} yields the following alternate bound, actually improving \eqref{e:pinocchio} in some instances.

\begin{thm}\label{t:killpinasco}
Under the above assumptions and notation, we have
\begin{equation}\label{killpinasco}
\mathcal{S}\,\sqrt{\frac{2^K\Gamma(K+\frac{n}{2})}{\Gamma(\frac{n}{2})\prod_{i=1}^d k_i!}}\ge \prod_{i=1}^d \mathcal{S}_i,
\end{equation}
with $K=k_1+\ldots+k_d$.
\end{thm}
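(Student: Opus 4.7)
My plan is to sandwich $\int_{\mathbb{R}^n}(F_1 \cdots F_d)^2 \, d\gamma_n$ between a lower bound coming from Theorem \ref{t:magicintro} and an upper bound coming from spherical polar coordinates, and then to separately control each $\mathcal{S}_i$ via a coefficient-level estimate exploiting the multilinearity of $F_i$.

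First I invoke Theorem \ref{t:magicintro} for the $F_i$'s (each normalized so that $\int F_i^2\,d\gamma_n = 1$) to obtain
\[
\int_{\mathbb{R}^n}\prod_{i=1}^d F_i^2 \, d\gamma_n \; \geq \; \prod_{i=1}^d \int_{\mathbb{R}^n} F_i^2 \, d\gamma_n \; = \; 1.
\]
Next, since $\prod_i F_i^2$ is homogeneous of degree $2K$, I rewrite the Gaussian integral in polar coordinates $x = r\theta$: the radial factor $\int_0^\infty r^{2K+n-1}e^{-r^2/2}\,dr = 2^{K+n/2-1}\Gamma(K+\tfrac{n}{2})$ combines with $|S^{n-1}| = 2\pi^{n/2}/\Gamma(\tfrac{n}{2})$ and the normalizing constant $(2\pi)^{-n/2}$ to give
\[
\int_{\mathbb{R}^n}\prod_{i=1}^d F_i^2 \, d\gamma_n \; = \; \frac{2^K\Gamma(K+\frac{n}{2})}{\Gamma(\frac{n}{2})} \int_{S^{n-1}}\prod_{i=1}^d F_i^2 \, d\sigma_{\mathrm{norm}} \; \leq \; \frac{2^K\Gamma(K+\frac{n}{2})}{\Gamma(\frac{n}{2})}\, \mathcal{S}^2,
\]
where $\sigma_{\mathrm{norm}}$ denotes the normalized uniform measure on $S^{n-1}$. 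Chaining these two already gives the crude estimate $\mathcal{S}^2 \geq \Gamma(\tfrac{n}{2})/(2^K\Gamma(K+\tfrac{n}{2}))$, which is \eqref{killpinasco} up to the missing factor $\prod_i k_i!\,\mathcal{S}_i^2$.

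The crux of the argument, and what I expect to be the only step requiring care, is therefore the pointwise upper bound $\mathcal{S}_i^2 \leq 1/k_i!$. Here I exploit that $F_i$, being homogeneous and lying in the $k_i$-th Wiener chaos, is forced by \eqref{explicit} to be \emph{multilinear}: $F_i(x) = \sum_{|S|=k_i} a^{(i)}_S\, x^S$ with $\sum_S (a^{(i)}_S)^2 = \int F_i^2\,d\gamma_n = 1$. Applying Cauchy--Schwarz coefficientwise and then using the elementary inequality
\[
k!\, e_k(y_1,\ldots,y_n) \; \leq \; (y_1+\cdots+y_n)^k, \qquad y_j \geq 0,
\]
obtained by restricting the multinomial expansion of $(\sum y_j)^k$ to its square-free monomials, one gets $F_i(x)^2 \leq e_{k_i}(x_1^2,\ldots,x_n^2) \leq 1/k_i!$ on $S^{n-1}$, hence $\mathcal{S}_i^2 \leq 1/k_i!$. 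Combining with the sandwich above yields $\prod_{i=1}^d k_i!\,\prod_{i=1}^d \mathcal{S}_i^2 \leq 1 \leq \tfrac{2^K\Gamma(K+n/2)}{\Gamma(n/2)}\mathcal{S}^2$, which rearranges into exactly \eqref{killpinasco}.
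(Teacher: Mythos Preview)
Your proof is correct. The first half---sandwiching $\int \prod F_i^2\,d\gamma_n$ between $1$ (from Theorem~\ref{t:magicintro}) and $\frac{2^K\Gamma(K+n/2)}{\Gamma(n/2)}\,\mathcal{S}^2$ (via polar coordinates)---is exactly the paper's Theorem~\ref{Mino1}.

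The difference lies in how you establish $\mathcal{S}_i \le 1/\sqrt{k_i!}$. The paper proves this as Proposition~\ref{minolemma} by induction on $k$: one writes $\int F^{2p}\,d\gamma_n = \frac{2p-1}{k}\int F^{2p-2}\|\nabla F\|^2\,d\gamma_n$ via integration by parts, applies the inductive hypothesis to each $\partial_l F$ (homogeneous of degree $k-1$, lying in the $(k-1)$-st chaos), separates the radial and spherical parts, and lets $p\to\infty$. Your argument instead exploits the multilinear form of $F_i$ directly: Cauchy--Schwarz on the coefficients gives $F_i(x)^2 \le e_{k_i}(x_1^2,\dots,x_n^2)$, and then the multinomial bound $k!\,e_k(y)\le(\sum_j y_j)^k$ finishes on $S^{n-1}$.

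Your route is markedly more elementary and avoids the induction machinery altogether. The paper's Proposition~\ref{minolemma}, however, is stated and proved for \emph{any} homogeneous eigenfunction of $\mathcal{L}$, and its inductive proof never uses multilinearity---so it covers, for instance, $F(x_1,x_2)=x_1^2-x_2^2=H_2(x_1)-H_2(x_2)$, which is homogeneous, lies in the second chaos, but is not multilinear. Your argument is tied to the multilinear form displayed in Section~\ref{ss:pinintro}; since the theorem is stated under precisely that hypothesis, this is not a gap, but the paper's lemma is the more widely applicable tool.
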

\begin{rmk}{\rm
The bound (\ref{killpinasco}) is an improvement of \cite[corollary 4.6]{Pi12} as soon as:
\begin{equation}\label{killpi}
\frac{2^K\Gamma(K+\frac{n}{2})}{\Gamma(\frac{n}{2})\prod_{i=1}^d k_i!}\le 2^{K-2} \frac{ K^K}{k_1^{k_1}\cdots k_d^{k_d}}.
\end{equation}
For instance, it is straightforward to check that inequality (\ref{killpi}) indeed takes place whenever $n=o(d)$, $n\to\infty$ and $k_1=\ldots=k_d=2$ (that is, the number $n$ of variables is negligible with respect to the number $d$ of quadratic forms $F_i$).
}
\end{rmk}
\begin{rmk}
{\rm When $n=d$ and $k_i=1$ for all $i$, equation \eqref{killpinasco} corresponds to Frenkel's bound \eqref{e:fuf}.
}
\end{rmk}

\subsection{Infinite-dimensional Gaussian fields}\label{ss:remintro}

An important remark is that our estimate \eqref{magic-conclusion} holds independently of the chosen dimension $n$. It follows that, owing to some standard argument based on hypercontractivity, relation \eqref{magic-conclusion} extends almost verbatim to the framework of a general isonormal Gaussian process $X = \{X(h) : \mathcal{H}\}$ over a real separable Hilbert space $\mathcal{H}$. Recall that $X$ is, by definition, a centered Gaussian family indexed by the elements of $\mathcal{H}$ and such that, for very $h,h'\in \mathcal{H}$, $E[X(h)X(h')] = \langle h, h' \rangle_\mathcal{H}$. As explained e.g. in \cite[Chapter 2]{np-book}, in this possibly infinite-dimensional framework, one can still define the Ornstein-Uhlenbeck semigroup $(P_t)_{t\geq 0}$ and its generator $\mathcal{L}$ as operators acting on the space $L^2(\sigma(X))$ of square-integrable random variables that are measurable with respect to $\sigma(X)$. As in the finite-dimensional case, one has that $\text{Sp}(-\mathcal{L})=\mathbb{N}$ and, for every $k\geq 1$, one has the following classical characterisation of the $k$th Wiener chaos associated with $X$:
$$
\text{Ker}(\mathcal{L}+k\,I) = \{I_k(f) : f\in \mathcal{H}^{\odot k}\},
$$
where $\mathcal{H}^{\odot k}$ indicates the $k$th symmetric tensor product of $\mathcal{H}$, and $I_k$ indicates a multiple Wiener-It\^o integral of order $k$ with respect to $X$ (recall in particular that $E[I_k(f)^2] = k! \| f\|^2_{\mathcal{H}^{\otimes k}}$, with $\otimes$ indicating a standard tensor product -- see e.g. \cite[Section 2.7]{np-book}). The following statement is the infinite-dimensional counterpart of Theorem \ref{t:magicintro}.

\begin{thm}\label{t:isonormal} Under the above assumptions and notation, fix $d\geq 2$ and let $k_1,..., k_d\geq 1$ be integers. For $i = 1,...,d$, let $f_i\in \mathcal{H}^{\odot k_i}$. Then,
$$
E[I_{k_1}(f_1)^2\cdots I_{k_d}(f_d)^2 ] \geq \prod_{i=1}^d E[I_{k_i}(f_i)^2] = \prod_{i=1}^d k_i!\| f_i\|^2_{\mathcal{H}^{\otimes k_i}}.
$$
\end{thm}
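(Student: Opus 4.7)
The plan is to reduce Theorem~\ref{t:isonormal} to the finite-dimensional Theorem~\ref{t:magicintro} via a standard approximation argument, the only non-trivial technical ingredient being the hypercontractivity of the Ornstein--Uhlenbeck semigroup on a fixed Wiener chaos.

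First, I would fix a nested sequence $(\mathcal{H}_N)_{N\geq 1}$ of finite-dimensional subspaces of $\mathcal{H}$ whose union is dense, and for each $i=1,\dots,d$ denote by $f_i^{(N)}$ the orthogonal projection of $f_i$ onto $\mathcal{H}_N^{\odot k_i}$, so that $f_i^{(N)}\to f_i$ in $\mathcal{H}^{\otimes k_i}$. Choosing an orthonormal basis $(e_1,\dots,e_N)$ of $\mathcal{H}_N$, the random vector $(X(e_1),\dots,X(e_N))$ is standard Gaussian, and the classical identification between multiple Wiener--It\^o integrals and Hermite polynomials (see e.g.\ \cite[Section~2.7]{np-book}) shows that $I_{k_i}(f_i^{(N)})$ is exactly a polynomial of the form \eqref{explicit} in these variables, lying in $\mathrm{Ker}(\mathcal{L}+k_i I)$ as an element of $L^2(\gamma_N)$. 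Theorem~\ref{t:magicintro} then yields
$$
E\Bigl[\prod_{i=1}^d I_{k_i}(f_i^{(N)})^2\Bigr]\;\geq\;\prod_{i=1}^d E\bigl[I_{k_i}(f_i^{(N)})^2\bigr]\;=\;\prod_{i=1}^d k_i!\,\|f_i^{(N)}\|^2_{\mathcal{H}^{\otimes k_i}}.
$$

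Second, I would let $N\to\infty$. The right-hand side converges to $\prod_{i=1}^d k_i!\,\|f_i\|^2_{\mathcal{H}^{\otimes k_i}}$ by the $L^2$-isometry of $I_{k_i}$ combined with $f_i^{(N)}\to f_i$. For the left-hand side, note that $I_{k_i}(f_i^{(N)})-I_{k_i}(f_i)=I_{k_i}(f_i^{(N)}-f_i)$ belongs to the $k_i$-th Wiener chaos and converges to $0$ in $L^2(P)$; by hypercontractivity, on a fixed chaos all $L^p$ norms are equivalent for $p\in[2,\infty)$, so this convergence upgrades to $L^{2d}(P)$. An application of H\"older's inequality with equal exponents $d$ then shows that $\prod_{i=1}^d I_{k_i}(f_i^{(N)})^2 \to \prod_{i=1}^d I_{k_i}(f_i)^2$ in $L^1(P)$, and the inequality passes to the limit.

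The step I expect to be most delicate is precisely the passage to the limit on the left-hand side: pure $L^2$-convergence of each factor would not suffice to control the $L^1$-convergence of the product, which is why hypercontractivity (equivalently, Nelson's theorem on the $L^p$-continuity of $P_t$ on a fixed chaos) is indispensable. Everything else in the argument is routine, and once the limit is taken no remnant of the finite dimension $N$ is left, so the bound $E[\prod_i I_{k_i}(f_i)^2]\geq \prod_i k_i!\,\|f_i\|^2_{\mathcal{H}^{\otimes k_i}}$ follows in the full generality of an isonormal Gaussian process.
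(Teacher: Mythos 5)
Your proposal is correct and follows essentially the same route as the paper: the paper also projects onto the chaos generated by the first $n$ basis vectors (phrased there as the conditional expectation $E[I_{k_i}(f_i)\,|\,\mathcal{F}_n]=I_{k_i}(\pi_{k_i,n}(f_i))$, which is exactly your $I_{k_i}(f_i^{(N)})$), applies Theorem~\ref{t:magicintro} in finite dimensions, and passes to the limit using hypercontractivity of a fixed Wiener chaos. Your explicit H\"older step with exponents $d$ just spells out the $L^1$-convergence of the product that the paper leaves implicit.
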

A complete proof of Theorem \ref{t:isonormal} is given in Section \ref{ss:proofiso}.

\subsection{A new result supporting the U-conjecture}\label{s:u-conj}

Let $X=(X_1,\cdots,X_n)$ be a Gaussian vector such that $X\sim \mathcal{N}(0,\textbf{I}_n)$. The celebrated $U$-conjecture formulated in \cite{KLR} corresponds to the following implication:
\begin{center}
\mbox{\it ``If two polynomials $P(X)$ and $Q(X)$ are independent, then they are unlinked''.}
\end{center}
We recall that $P(X)$ and $Q(X)$ are said to be {\it unlinked} if there exist an isometry $T : \R^n\to \R^n $ and an index $r\in\{1,\cdots,n-1\}$ such that  $P(X)\in\R[Y_1,\cdots,Y_r]$ and $Q(X) \in \R[Y_{r+1},\cdots,Y_n]$, where $Y=(Y_1,\cdots,Y_n)=T(X)$. To the best of our knowledge, the most general result around this question is due to G. Harg\'e \cite{H05}, {where it is proved that the conjecture holds for nonnegative convex polynomials. As already recalled, all the existing results around this question (see e.g. \cite{bb, H05} and the references therein) are of a similar nature, since they rely in one way or the other on the convexity of $P$ and $Q$. The following result is our main finding on the topic:}

\begin{thm}\label{u-statement}
Introduce the following class of polynomials:
\begin{equation*}
\mathcal{C}=\left\{ \sum_{k=1}^m F_k^2\,\Big{|}\,m \ge 1, F_k \in \text{Ker}(\mathcal{L}+k\,\Id)\right\}.
\end{equation*}
Given $q$ polynomials $P_1(X),\cdots,P_q(X)$ in the class $\mathcal{C}$ such that $P_1(X),\cdots,P_q(X)$ are pairwise independent, there exists an isometry $T$ of $\R^n$ and $(E_i)_{1\le i \le q}$ a collection of pairwise disjoint sets of $\{1,\cdots,n\}$ such that for all $1\le i \le q$, $P_i(X)\in \R[Y_j,j\in E_i]$ (with $Y=T(X)$).
\end{thm}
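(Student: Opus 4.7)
The plan is to exploit the equality case in Theorem \ref{t:magicintro} in order to reduce the statement to a structural result about independent chaotic random variables, and then to assemble the isometry from the effective supports of the chaotic components.

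First I would expand each $P_i = \sum_{k=1}^{m_i} F_{i,k}^2$ with $F_{i,k} \in \text{Ker}(\mathcal{L} + k\,\Id)$. For $i\neq j$, pairwise independence of $P_i$ and $P_j$ yields
\begin{equation*}
E[P_i P_j] - E[P_i]\,E[P_j] \;=\; \sum_{k,l}\Big( E[F_{i,k}^2 F_{j,l}^2] - E[F_{i,k}^2]\,E[F_{j,l}^2]\Big) \;=\; 0.
\end{equation*}
By Theorem \ref{t:magicintro} applied with $d=2$ to each pair $(F_{i,k},F_{j,l})$, every summand on the right is non-negative, so the vanishing of the total sum forces each summand to vanish. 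The equality clause of Theorem \ref{t:magicintro} then provides that $F_{i,k}$ and $F_{j,l}$ are \emph{independent} random variables (not merely orthogonal) for every admissible pair $(k,l)$ whenever $i\neq j$.

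For each chaotic polynomial $F\in\text{Ker}(\mathcal{L}+k\,\Id)$, denote by $V(F)\subseteq\R^n$ its \emph{effective subspace}, defined as the linear span of $\{\nabla F(x):x\in\R^n\}$, or equivalently as the smallest subspace such that $F$ is constant under translations by vectors of $V(F)^\perp$. The crucial intermediate claim I would establish is: \emph{if $F$ and $G$ belong to (possibly distinct) Wiener chaoses and are independent as random variables, then $V(F)\perp V(G)$.} This is the place where I expect the main analytical work to be concentrated. One natural route is to pick an orthonormal basis of $\R^n$ adapted to $V(F)+V(G)$ and, using the Hermite expansion of $F$ and $G$, to show that $E[F^2G^2]-E[F^2]E[G^2]$ can be written as a sum of quantities controlled from below by squared entries of the Gram matrix $(\langle v,w\rangle)_{v\in V(F),\,w\in V(G)}$, so that independence forces this Gram matrix to vanish. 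An alternative is a \"Ust\"unel--Zakai type criterion, characterising independence of $I_p(f)$ and $I_q(g)$ through vanishing of all non-trivial contractions $f\otimes_r g$, and then deducing orthogonality of the supporting subspaces from the vanishing of the $1$-contraction evaluated on arbitrary test tensors.

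Granted this intermediate claim, set $V_i:=\sum_{k=1}^{m_i}V(F_{i,k})\subseteq\R^n$. Applying the claim to every pair $(F_{i,k},F_{j,l})$ with $i\neq j$ yields $V_i\perp V_j$, so that $V_1,\ldots,V_q$ form an orthogonal family. Choose an orthonormal basis of $\R^n$ adapted to the orthogonal decomposition $\R^n=V_1\oplus\cdots\oplus V_q\oplus W$, where $W$ is the orthogonal complement of $V_1+\cdots+V_q$; this basis defines the sought isometry $T$. Letting $E_i\subseteq\{1,\ldots,n\}$ index those new coordinates that span $V_i$ produces the required pairwise disjoint subsets, since in the coordinates $Y=T(X)$ each $F_{i,k}$, and hence $P_i$, depends only on $\{Y_j:j\in E_i\}$.
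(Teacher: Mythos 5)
Your proposal is correct and follows essentially the same route as the paper: the positivity of each covariance summand (Theorem \ref{t:magicintro} with $d=2$, together with its equality clause) forces independence of every cross-pair of chaotic components, and the \"Ust\"unel--Zakai criterion --- the second of your two suggested routes --- then yields orthogonality of the effective supports, from which the isometry and the index sets $E_i$ are assembled. The paper carries this out in the isonormal $L^2([0,1])$ formalism and needs its Lemma \ref{quinousafaitchie} to guarantee that each kernel lies in the tensor powers of its effective support, which is the analogue of your translation-invariance characterisation of $V(F)$.
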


First, $P(X)=H_q^2(X_1)\in\mathcal{C}$ but is not convex since its derivative $$2q H_q(X_1) H_{q-1}(X_1)$$ has $2q-1$ real roots and cannot be increasing. {As already recalled, all the existing results around the U-conjecture require $P$ and $Q$ to be convex: it is therefore remarkable that our result is the first one verifying the conjecture in a framework where the convexity plays absolutely no role. }Secondly, we stress that we could handle the case of an arbitrary number of polynomial whereas the existing literature is limited to  $q=2$. This improvement relies on the particular algebraic properties of the class $\mathcal{C}$ and does not seem easily reachable for the class of nonnegative convex polynomials. A complete proof of Theorem \ref{u-statement} is given in Section \ref{proof-u-statement}.

\subsection{Plan}

The paper is organised as follows. In Section 2, we discuss some further preliminary results about Gaussian vectors and associated operators. Section 3 contains the proof of our main results. Section 4 focuses on an application of our results to the {\it Hadamard inequality} of matrix analysis.

\section{Further preliminaries}

We will often use the fact that the action of the semigroup $P_t$ on smooth
functions $f : \R^n\to \R$ admits the integral representation
(called {\it Mehler's formula})
\begin{equation}\label{mehler}
 P_t f(x) \, = \,  \int_{\R^n} f \big ( e^{-t} x + \sqrt {1- e^{-2t}} \, y \big ) d\gamma_n (y),
    \quad t \geq 0, \, \, x \in \R^n;
\end{equation}
see e.g. \cite[Section 2.8.1]{np-book} for a proof of this fact. Another important remark is that the generator $\cal L$ is a diffusion and satisfies the integration by parts formula
\begin {equation} \label {e:ipp}
 \int_{\R^n} f \, {\cal L} g \,  d\gamma_n  \, = \, -  \int_{\R^n} \langle \nabla f ,\nabla g \rangle d\gamma_n
\end {equation}
for every pair of smooth functions $f, g : \R^n \to \R$.

\medskip

The following two elementary results will be needed in several instances. They can both be verified by a direct computation. We recall that a positive random variable $R^2$ has a {\it $\chi^2$ distribution with $n$ degrees of freedom} (written $R^2\sim \chi^2(n)$) if the distribution of $R^2$ is absolutely continuous with respect to the Lebesgue measure, with density $f(x) = ( 2^{n/2}\Gamma(n/2))^{-1}x^{n/2-1}e^{-x/2}{\bf 1}_{x>0}$.

\begin{lma}\label{l:radial}
For $n\geq 2$, let ${\bf g}\sim N(0,I_n)$ be a $n$-dimensional centered Gaussian vector with identity covariance matrix. Then ${\bf g}$ has the same distribution as $R{\bs \theta}$, where $R\geq 0$ is such that $R^2\sim \chi^2(n)$, ${\bs \theta}$ is uniformly distributed on the sphere $S^{n-1}$, and ${\bs \theta}$ and $R$ are independent.
\end{lma}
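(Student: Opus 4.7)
The plan is a direct change-of-variables computation in polar coordinates, which simultaneously yields the independence of $R$ and ${\bs \theta}$, the uniformity of ${\bs \theta}$ on $S^{n-1}$, and the chi-squared law for $R^2$. Set $R := \|{\bf g}\|$ and ${\bs \theta} := {\bf g}/\|{\bf g}\|$; these are well-defined almost surely since $P({\bf g}=0)=0$. The goal is then to compute the joint law of $(R,{\bs \theta})$ by testing against bounded measurable functions $\varphi:[0,\infty)\times S^{n-1}\to\R$.

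I would start from
$$E[\varphi(R,{\bs \theta})] \;=\; (2\pi)^{-n/2}\int_{\R^n} \varphi\bigl(\|x\|,x/\|x\|\bigr) e^{-\|x\|^2/2}\,dx,$$
and pass to spherical coordinates $x = r\theta$, $r>0$, $\theta\in S^{n-1}$, under which $dx = r^{n-1}\,dr\,d\sigma(\theta)$, with $\sigma$ the (unnormalised) surface measure on $S^{n-1}$. This transforms the right-hand side into
$$\int_0^\infty\!\!\int_{S^{n-1}} \varphi(r,\theta)\,(2\pi)^{-n/2} r^{n-1} e^{-r^2/2}\,dr\,d\sigma(\theta).$$
Because the integrand factors as a function of $r$ times a function of $\theta$, the pair $(R,{\bs \theta})$ is independent, and one reads off the marginals directly. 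Taking $\varphi$ to depend only on $\theta$ and normalising shows that ${\bs \theta}$ is distributed as the normalised surface measure on $S^{n-1}$, i.e., uniformly. Taking $\varphi$ to depend only on $r$ identifies the density of $R$ as $c_n\,r^{n-1}e^{-r^2/2}{\bf 1}_{r>0}$ for the appropriate normalising constant $c_n$.

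It remains to transform the law of $R$ into the law of $R^2$. Under $s = r^2$, $ds = 2r\,dr$, the density of $R^2$ becomes proportional to $s^{n/2-1}e^{-s/2}{\bf 1}_{s>0}$, which is exactly the $\chi^2(n)$ density stated in the preamble. There is no real obstacle to this argument; the only small points of care are tracking the Jacobian $r^{n-1}$ in the spherical change of variables and the factor $1/(2r)$ in the squaring step, after which the normalising constants line up automatically against $\Gamma(n/2)$ via the standard Gaussian integral $\int_0^\infty r^{n-1}e^{-r^2/2}dr = 2^{n/2-1}\Gamma(n/2)$.
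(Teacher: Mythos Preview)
Your argument is correct and is exactly the ``direct computation'' the paper alludes to (the paper does not spell out a proof of this lemma, merely stating that it ``can be verified by a direct computation''). The only cosmetic point is that it is the joint \emph{density} with respect to $dr\,d\sigma(\theta)$, namely $(2\pi)^{-n/2} r^{n-1}e^{-r^2/2}$, that factors (trivially, being a function of $r$ alone), not the integrand $\varphi(r,\theta)$ itself; but this is clearly what you mean.
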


\begin{lma}\label{l:chi2}
Let $R^2\sim \chi^2(n)$, $n\geq 1$. Then,
\begin{equation}\label{e:mom}
E[R^{2q}] = \frac{2^q\Gamma(n/2+q)}{\Gamma(n/2)}, \quad q\geq 0.
\end{equation}
\end{lma}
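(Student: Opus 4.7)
The plan is to establish Lemma \ref{l:chi2} by a direct computation from the explicit $\chi^2(n)$ density, which is the shortest route given the elementary nature of the statement. I would first write
\begin{equation*}
E[R^{2q}] \,=\, \frac{1}{2^{n/2}\Gamma(n/2)} \int_0^\infty x^{q + n/2 - 1}\, e^{-x/2}\, dx,
\end{equation*}
and then reduce the right-hand side to a Gamma integral via the linear substitution $u = x/2$. The powers of $2$ picked up from $x^{q + n/2 - 1}$ and from $dx$ combine with the density's normalising constant $2^{-n/2}$ to leave a net factor of $2^q$, while the remaining integral evaluates to $\int_0^\infty u^{q + n/2 - 1} e^{-u}\, du = \Gamma(q + n/2)$. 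This immediately yields the claimed identity.

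The only analytic point that needs verification is that the integral converges: since $q \geq 0$ and $n \geq 1$, the exponent $q + n/2 - 1$ is strictly greater than $-1$, so the singularity at the origin is integrable, while $e^{-x/2}$ ensures integrability at infinity. There is therefore no genuine obstacle to this argument; equivalently, one could invoke the standard moment formula for the Gamma distribution after observing that $R^2$ has the $\Gamma(n/2,\, 2)$ law (shape $n/2$, scale $2$), or for integer $q$ derive the recursion $E[R^{2q}] = (n + 2q - 2)\, E[R^{2q - 2}]$ by integration by parts, but the single change of variables above is completely self-contained and does not rely on any other result of the paper.
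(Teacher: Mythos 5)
Your computation is correct and is exactly the ``direct computation'' the paper invokes without writing out: the change of variables $u=x/2$ turns the moment integral into $\Gamma(q+n/2)$ with a net factor $2^q$, as claimed. Nothing further is needed.
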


\section{Proofs of the main results}

\subsection{Proof of Theorem \ref{t:magicintro}}

The principal aim of this section is to prove the following result, which implies in particular Theorem \ref{t:magicintro}.

\begin{thm}\label{quivabien}
Fix $d\geq 1$, as well as integers $k_1,...,k_d\geq 1$. For $i=1,...,d$, let $F_i \in {\rm Ker}(\mathcal{L}+k_i\,I)$. Then, for any $t\geq 0$,
\begin{equation}\label{negatif}
\sum_{i=1}^d \int_{\R^n} \left(
 {\cal L}P_t (F_i^2)\prod_{\stackrel{j=1}{j\neq i}}^d P_t (F_j^2)\right)d\gamma_n
\leq 0.
\end{equation}
In particular, relation \eqref{magic-conclusion} holds, with equality if and only if the $F_i$'s are jointly independent.

\end{thm}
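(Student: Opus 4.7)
Set $\Phi(t) := \int_{\R^n} \prod_{i=1}^d P_t(F_i^2)\, d\gamma_n$. Differentiating under the integral and using $\partial_t P_t = \mathcal{L}P_t$, one identifies $\Phi'(t)$ with the left-hand side of \eqref{negatif}, so proving that $\Phi$ is non-increasing is equivalent to \eqref{negatif}. Moreover, $\Phi(0) = \int \prod F_i^2\, d\gamma_n$ by continuity of $P_t$ at $t=0$, while $\Phi(t) \to \prod \int F_i^2\, d\gamma_n$ as $t\to\infty$ (a direct consequence of the chaos decomposition $P_t(F_i^2) = E[F_i^2] + \sum_{m\geq 1} e^{-2mt}\pi_{2m}(F_i^2)$, where $\pi_{2m}$ is the projection onto the $(2m)$-th Wiener chaos, combined with dominated convergence). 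Integrating the monotonicity on $[0,\infty)$ therefore yields \eqref{magic-conclusion} at once.

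To prove $\Phi'\leq 0$ I would start from Mehler's formula together with the Hermite binomial identity: for $F_i\in\mathrm{Ker}(\mathcal{L}+k_i\, I)$,
\[
F_i\bigl(e^{-t}x + \sqrt{1-e^{-2t}}\,y\bigr) = \sum_{m=0}^{k_i} e^{-mt}(1-e^{-2t})^{(k_i-m)/2}\, Q_{i,m}(x,y),
\]
where each $Q_{i,m}$ lies in the $m$-th $x$-chaos and $(k_i-m)$-th $y$-chaos. Squaring, integrating over $y$ against $\gamma_n$, and using orthogonality of distinct Hermite chaoses in $y$ kills the off-diagonal $(m,m')$ terms and produces the positivity-preserving representation
\[
P_t(F_i^2)(x) = \sum_{m=0}^{k_i} u^m(1-u)^{k_i-m}\, R_{i,m}(x), \qquad R_{i,m}(x) := \|Q_{i,m}(x,\cdot)\|^2_{L^2(\gamma_n)} \geq 0,
\]
with $u:=e^{-2t}\in[0,1]$. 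Integrating the product over $x$ then gives $\Phi(u) = \sum_{s=0}^{N} \tilde e_s\, u^s$, where $N := \sum_i k_i$ and $\tilde e_s := \sum_{|\mathbf m|=s} E\bigl[\prod_i \pi_{2m_i}(F_i^2)\bigr]$ (the latter identity coming from matching coefficients with the spectral expansion $P_t(F_i^2) = \sum_m u^m\, \pi_{2m}(F_i^2)$).

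The heart of the argument is the claim $\tilde e_s \geq 0$ for every $s\geq 1$, which immediately implies $\Phi'(u) \geq 0$ on $[0,1]$ and hence $\Phi'(t) \leq 0$. For $d=2$ the inequality reduces to a trace-of-PSD-matrices identity: the product formula $\pi_{2m}(F_i^2) = c_{i,m}\, I_{2m}(f_i\,\tilde\otimes_{k_i-m}f_i)$ combined with the Wiener--It\^o isometry reduces $\langle \pi_{2m}(F_1^2),\pi_{2m}(F_2^2)\rangle$ to $\mathrm{Tr}(M_1 M_2)\geq 0$, where each $M_i$ is the self-contraction of $f_i$ (viewed as a matrix on symmetric $m$-tensors) and hence positive semidefinite. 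For $d \geq 3$ I would proceed by induction on $N$: the case $k_1=\cdots=k_d=1$ is Frenkel's inequality \eqref{e:f}, and the further SOS decomposition $R_{i,m} = \sum_{\mathbf b}\mathbf b!\, G_{i,m,\mathbf b}^2$ with $G_{i,m,\mathbf b}\in\mathrm{Ker}(\mathcal{L}+m\, I)$ allows the inductive hypothesis to bound every contribution involving chaos elements of total degree strictly less than $N$, while the unique ``diagonal'' term (all $m_i=k_i$, all $\mathbf b_i=\mathbf 0$) produces the boundary piece $u^N\, E[\prod F_i^2]$ that closes the bound via a telescoping identity coming from $E[P_t(F_i^2)] = E[F_i^2]$.

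The equality case in \eqref{magic-conclusion}, combined with $\Phi'\leq 0$, forces $\Phi$ to be constant and hence $\tilde e_s=0$ for every $s\geq 1$; the $s=2$ instance forces $\mathrm{Tr}(M_{i_1}M_{i_2})=0$ for every pair $\{i_1,i_2\}$, meaning the matrices $M_{i_1}, M_{i_2}$ have orthogonal ranges, and iterating through all higher $s$ forces the $F_i$'s to depend, after an orthogonal change of coordinates, on pairwise disjoint groups of Gaussian variables---equivalently, they are jointly independent. The main obstacle I expect is establishing $\tilde e_s\geq 0$ for $d\geq 3$: the clean PSD/trace argument for two factors does not directly extend, and the induction on $N$ requires careful book-keeping to separate the ``diagonal'' contribution from the sub-dominant terms amenable to the inductive hypothesis.
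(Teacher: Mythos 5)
Your reduction of \eqref{negatif} to the monotonicity of $\Phi$, and of \eqref{magic-conclusion} to \eqref{negatif}, is correct and coincides with the paper's own Step 3; the Mehler-based representation $P_t(F_i^2)=\sum_m u^m(1-u)^{k_i-m}R_{i,m}$ with $R_{i,m}\ge 0$ is also valid, as is your $d=2$ argument (essentially the \"Ust\"unel--Zakai covariance identity). However, the claim on which everything for $d\ge 3$ rests --- that $\tilde e_s=\sum_{|\mathbf m|=s}E\big[\prod_i\pi_{2m_i}(F_i^2)\big]\ge 0$ for every $s\ge 1$ --- is \emph{false}, so no amount of book-keeping will close the induction. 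Take $d=3$, $k_1=k_2=k_3=1$, $F_i=\langle a_i,x\rangle$ with $\|a_i\|=1$ and $\rho_{ij}=\langle a_i,a_j\rangle$. Then $\pi_2(F_i^2)=F_i^2-1$ and a Wick computation gives
\[
\tilde e_3=E\big[(F_1^2-1)(F_2^2-1)(F_3^2-1)\big]=8\,\rho_{12}\rho_{13}\rho_{23},
\]
which is negative, e.g., for $\rho_{12}=\rho_{13}=0.4$, $\rho_{23}=-0.4$ (a legitimate correlation matrix, with determinant $0.392$). Only the weaker statement $\sum_s s\,\tilde e_s u^{s-1}\ge 0$ on $[0,1]$ is true, and it cannot be obtained coefficientwise: in this example $\Phi(u)=1+0.96\,u^2-0.512\,u^3$, whose derivative is nonnegative on $[0,1]$ even though the top coefficient is negative. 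Your treatment of the equality case argues termwise from $\tilde e_s=0$ and therefore inherits the same flaw.

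The idea you are missing is how the paper proves \eqref{negatif} without any chaos-by-chaos positivity. At $t=0$ one observes that $G:=F_1\cdots F_d$ lies in $\bigoplus_{k=0}^{r}\mathrm{Ker}(\mathcal{L}+k\,I)$ with $r=k_1+\cdots+k_d$, whence $\int_{\R^n} G\,(\mathcal{L}+rI)G\,d\gamma_n\ge 0$ by the spectral decomposition; combining this with the diffusion identity $(\mathcal{L}+rI)(F_1\cdots F_d)=\sum_{i\ne j}\langle\nabla F_i,\nabla F_j\rangle\prod_{k\notin\{i,j\}}F_k$ and one integration by parts yields \eqref{negatif} at $t=0$ for all $d$ at once, with no induction. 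The case of general $t$ then follows not by expanding $P_t$ in chaoses but by a tensorisation trick: one applies the $t=0$ inequality on $\R^{n(d+1)}$ to the eigenfunctions $f_i(x_0,\dots,x_d)=F_i(e^{-t}x_0+\sqrt{1-e^{-2t}}\,x_i)$ and integrates out $x_1,\dots,x_d$ using Mehler's formula. The equality case is then handled by induction on $r$ via the identity $\mathcal{L}(F_i^2)=-2k_iF_i^2+2\|\nabla F_i\|^2$, reducing to pairwise uncorrelatedness of squares and invoking Rosi\'nski--Samorodnitsky and \"Ust\"unel--Zakai.
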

\noindent
{\it Proof}. The proof is subdivided into four steps. In the first one, we show (\ref{negatif}) in the particular case where $t=0$. In the second one we deduce (\ref{negatif}) in all its generality, by
relying on the conclusion of the first step and by using the tensorisation argument. The proof of (\ref{magic-conclusion}) is achieved in the third step, while in the fourth step we deal with independence.\\

\noindent \underline{\it Step 1}. We shall first prove (\ref{negatif})  for $t=0$, which states that
\begin{equation}\label{t=0}
\sum_{i=1}^d\int_{\R^n}
\left( {\cal L} (F_i^2)\prod_{\stackrel{j=1}{j\neq i}}^d F_j^2 \right)d\gamma_n
\leq 0.
\end{equation}
To do so, we follow an idea first used in \cite{Az14,Az15}.
First, using (\ref{explicit}) we note that each $F_i$ is a multivariate polynomial of degree $k_i$.
Hence, $F_1\ldots F_d$ is a multivariate polynomial of degree $r=k_1+\cdots+k_d$.
As a result, and after expanding $F_1\cdots F_d$ over the basis of multivariate Hermite polynomials, we obtain that $F_1\cdots F_d$ has a finite expansion over the first eigenspaces of $\mathcal{L}$, that is,
\[
F_1 \cdots F_d \in \bigoplus_{k=0}^r \text{Ker}(\mathcal{L}+k\,I),
\]
where $I$ stands for the identity operator.
From this, we deduce in particular that,
\begin{equation}\label{Main-intermed1}
\int_{\R^n}  F_1\ldots F_d \,\,( {\cal L} + r I)(F_1\ldots F_d)\,d\gamma_n \ge 0.
\end{equation}

Exploiting the explicit representation of $\mathcal{L}$ given in \eqref{e:ou}, one therefore infers that
\begin{eqnarray*}
\left( {\cal L} + r I\right)(F_1\ldots F_d)=  {\cal L} (F_1\ldots F_d)+r F_1\ldots F_d
=\sum_{\stackrel{i,j=1}{i\neq j}}^d \langle \nabla F_i,\nabla F_j\rangle \prod_{\stackrel{k=1}{k\notin \{i,j\}}}^d F_k,
\end{eqnarray*}
in such a way that (\ref{Main-intermed1}) is equivalent to
\begin{equation}\label{Main-intermed2}
\sum_{\stackrel{i,j=1}{i\neq j}}^d
\int_{\R^n} \left(F_i F_j\,\langle \nabla F_i,\nabla F_j \rangle\prod_{\stackrel{k=1}{k\notin \{i,j\}}}^d F_k^2 \right) d\gamma_n\ge 0.
\end{equation}
Now, to see why (\ref{t=0}) holds true, it suffices to observe that,  after a suitable integration by parts,
\begin{eqnarray*}
\sum_{i=1}^d\int_{\R^n}
  \left({\cal L} (F_i^2)\prod_{\stackrel{j=1}{j\neq i}}^d F_j^2\right) d\gamma_n&=&-\sum_{i=1}^d
\int_{\R^n}
 \langle \nabla F_i^2, \nabla
 \prod_{\stackrel{j=1}{j\ne i}}^d
 F_j^2\rangle d\gamma_n\\
&=&-\sum_{\stackrel{i,j=1}{i\neq j}}^d
\int_{\R^n} \left(
\langle \nabla F_i^2,\nabla F_j^2\rangle \prod_{\stackrel{k=1}{k\notin \{i,j\}}}^d F_k^2\right) d\gamma_n.
\end{eqnarray*}
By (\ref{Main-intermed2}), this last quantity is less or equal than zero, thus yielding the desired conclusion.\\

\noindent\underline{\it Step 2}. We now make use of a tensorization trick in order to prove (\ref{negatif}) for every $t\geq 0$. Since we deal here with several dimensions simultaneously, we will be more accurate in the notation and write $$ {\cal L} ^k_x=\Delta_x-\langle x,\nabla_{\!x}\rangle\quad(x\in\R^k)$$ to indicate the Ornstein-Uhlenbeck generator on $\R^k$ with the letter $x$ used to perform differentiation.
Set $m=n(d+1)$. If ${\bf x}=(x_0,\ldots,x_d)$ denotes the generic element of $\R^{m}$ with $x_0,\ldots,x_d\in\R^n$, one has
\begin{equation}\label{decompo}
 {\cal L} ^{m}_{\bf x} =  {\cal L} ^n_{x_0} + \ldots +   {\cal L} ^n_{x_d} .
\end{equation}
For each $i=1,...,d$, set $f_i({\bf x})=f_i(x_0,\ldots,x_d)=F_i(e^{-t}x_0+\sqrt{1-e^{-2t}}x_i)$. It is straightforward to check that
\[
( {\cal L} ^m_{\bf x} f_i)({\bf x}) = -k_i F_i (e^{-t}x_0+\sqrt{1-e^{-2t}}x_i)=-k_i f_i({\bf x}).
\]
By the conclusion (\ref{t=0}) of Step 1 with $m$ instead of $n$ and $f_i$ instead of $F_i$, one has
\begin{equation}\label{coucou}
\sum_{i=1}^d\int_{\R^m}  \left( {\cal L} ^m_{\bf x}(f_i^2)({\bf x})\prod_{\stackrel{j=1}{j\neq i}}^d f_j^2({\bf x})\right)d\gamma_m({\bf x})
\leq 0.
\end{equation}
Now, observe that
\begin{equation}\label{zero1}
 {\cal L} ^n_{x_k}f_i^2\equiv 0\quad  \mbox{for any $k\in\{1,\ldots,d\}\setminus\{i\}$}.
\end{equation}
Also, using Fubini through the decomposition $d\gamma_m({\bf x})=d\gamma_n(x_0)\ldots d\gamma_n(x_d)$, one deduces
\begin{eqnarray}
&&\int_{\R^m}  \left( {\cal L} ^n_{x_i}(f_i^2)({\bf x})\prod_{\stackrel{j=1}{j\neq i}}^d f_j^2({\bf x})\right) d\gamma_m({\bf x}) \label{zero2}\\
&=&\int_{\R^n}  d\gamma_n(x_0) \int_{\R^n}d\gamma_n(x_i) {\cal L} ^n_{x_i}(f_i^2)({\bf x})\prod_{\stackrel{j=1}{j\neq i}}^d \int_{\R^n}d\gamma_n(x_j)f_j^2({\bf x})=0\notag
\end{eqnarray}
the last equality coming from (\ref{e:ipp}), with $f\equiv 1$.
Using the decomposition (\ref{decompo}) and plugging (\ref{zero1}) and (\ref{zero2}) into (\ref{coucou})
leads to
\begin{equation}\label{coucou2}
\sum_{i=1}^d\int_{\R^m}  \left( {\cal L}^n_{x_0}(f_i^2)({\bf x})\prod_{\stackrel{j=1}{j\neq i}}^d f_j^2({\bf x})\right) d\gamma_m({\bf x})
\leq 0.
\end{equation}
Finally,
by integrating (\ref{coucou2}) with respect to $x_1,\ldots,x_d$ and exploiting the Mehler's formula (\ref{mehler}) (for the semigroup $P_t$ with respect to $x_0$), we finally get (\ref{negatif}), thus completing the proof of the first part of Theorem \ref{quivabien}.\\

\noindent\underline{\it Step 3}. Let us finally deduce (\ref{magic-conclusion}) from (\ref{negatif}).
To this aim, let us introduce the function $\phi:[0,\infty)$ defined as
\[
\phi(t) = \int_{\R^n}\left(\prod_{i=1}^d P_t(F_i^2)\right)  d\gamma_n .
\]
Using that $\frac{d}{dt}P_t={\cal L}P_t$ (see e.g. \cite[Section 2.8]{np-book}) as well as the fact that each $F_i$ is a polynomial (in order to justify the exchange of derivatives and integrals), we immediately obtain that $\phi'(t)$ equals the left-hand side of (\ref{negatif}) and so is negative. This implies that $\phi$ is decreasing, yielding in turn that $\phi(0)\geq \lim_{t\to\infty}\phi(t)$. Such an inequality is the same as (\ref{magic-conclusion}).\\

\noindent\underline{\it Step 4}. In this final step, we consider the equality case in (\ref{magic-conclusion}).
Since it was already observed in \cite{RS} that two chaotic random variables are independent if and only if their squares are uncorrelated, one can and will assume in this step that $d\geq 3$. That being said, let us now prove by induction on $r=k_1+\ldots+k_d$ that we have equality in (\ref{magic-conclusion}) if and only if the $F_i$'s are independent.
The `if' part is obvious. So, let us assume that the claim is true for $r-1$ and that we have equality in (\ref{magic-conclusion}).
For each $i$, we can write
$$\mathcal{L}(F_i^2)=-2k_iF_i^2 +2\|\nabla F_i\|^2.$$
Plugging this into (\ref{t=0}) leads to
$$\sum_{i=1}^d k_i \int_{\R^n} F_1^2\ldots F_d^2\,d\gamma_n \geq \sum_{i=1}^d \int_{\R^n} \left(\|\nabla F_i\|^2
\prod_{\stackrel{j=1}{j\neq i}}^d F_j^2\right)d\gamma_n.$$
Since we have equality in (\ref{magic-conclusion}) and since $\int_{\R^n} \|\nabla F_i\|^2d\gamma_n = k_i \int_{\R^n} F_i^2d\gamma_n$,
we obtain
\begin{equation}\label{neg}
\sum_{i=1}^d \sum_{l=1}^n \left(  \int_{\R^n} \left(\left\|\partial_l F_i\right\|^2
\prod_{\stackrel{j=1}{j\neq i}}^d F_j^2\right)d\gamma_n
-\int_{\R^n} \left\|\partial_l F_i\right\|^2d\gamma_n
\prod_{\stackrel{j=1}{j\neq i}}^d \left(\int_{\R^n} F_j^2 d\gamma_n\right)
\right)\leq 0.
\end{equation}
But each summand in (\ref{neg}) is positive due to (\ref{magic-conclusion}).
Thus, the only possibility is that, for each $i$ and $l$,
$$
\int_{\R^n} \left(\left\|\partial_l F_i\right\|^2
\prod_{\stackrel{j=1}{j\neq i}}^d F_j^2\right)d\gamma_n
=\int_{\R^n} \left\|\partial_l F_i\right\|^2d\gamma_n
\prod_{\stackrel{j=1}{j\neq i}}^d \left(\int_{\R^n} F_j^2 d\gamma_n\right).
$$
As a result, and using the equality result for $r-1$ instead of $r$ (induction assumption), we deduce that, for each $i$ and $l$,
the random variables $\partial_l F_i$, $F_j$, $j\neq i$ are independent. In particular, since $d\geq 3$ the $F_i$'s are pairwise independence. To conclude, it suffices to recall that, for chaotic random variables, pairwise independence is equivalent to mutual independence (see \cite[Proposition 7]{UZ}).

\qed

The following corollary contains a slight improvement of Frenkel's inequality \eqref{e:f}.
\begin{cor}
Fix $d\geq 1$ and let
$F_1,\ldots, F_d\in {\rm Ker}(\mathcal{L}+I)$ be elements of the first Wiener chaos. Then,
\begin{equation}\label{frenkel-amelioree}
\int_{\R^n}  \left(\prod_{i=1}^d F_i^2\right) d\gamma_n\geq \frac{1}{d}\sum_{i=1}^d \int_{\R^n}  F_i^2d\gamma_n
\int_{\R^n} \left( \prod_{\stackrel{j=1}{j\neq i}}^d F_j^2\right) d\gamma_n.
\end{equation}
This implies in particular that
\begin{equation}\label{frenkel}
\prod_{i=1}^d\int_{\R^n} F_i^2 d\gamma_n\ \leq
\int_{\R^n} \left( \prod_{i=1}^d F_i^2\right) d\gamma_n,
\end{equation}
which is equivalent to \eqref{e:f}.
\end{cor}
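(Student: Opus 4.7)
The plan is to derive (\ref{frenkel-amelioree}) by specializing to the first chaos a gradient bound that falls out of the proof of Theorem \ref{quivabien}, and then to obtain (\ref{frenkel}) from (\ref{frenkel-amelioree}) by invoking the main inequality (\ref{magic-conclusion}) on the $(d-1)$-fold products appearing on the right-hand side.

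Concretely, I would start from the diffusion identity $\mathcal{L}(F_i^2)=2F_i\mathcal{L}F_i+2\|\nabla F_i\|^2=-2k_iF_i^2+2\|\nabla F_i\|^2$ (valid since $\mathcal{L}$ is a diffusion and $F_i\in\text{Ker}(\mathcal{L}+k_iI)$) and substitute it into (\ref{t=0}), exactly as in Step 4 of the proof of Theorem \ref{quivabien}, to obtain the auxiliary estimate
$$\sum_{i=1}^d k_i\int_{\R^n}F_1^2\cdots F_d^2\,d\gamma_n\;\geq\; \sum_{i=1}^d\int_{\R^n}\|\nabla F_i\|^2\prod_{j\neq i}F_j^2\,d\gamma_n.\qquad(\star)$$
The key observation, valid only in the first chaos, is that each $F_i$ is a linear form in the coordinates, so $\nabla F_i$ is a \emph{constant} vector in $\R^n$; in particular $\|\nabla F_i\|^2$ is a scalar that equals its Gaussian mean, and the integration-by-parts formula (\ref{e:ipp}) with $f=g=F_i$ gives $\|\nabla F_i\|^2=\int_{\R^n}F_i^2\,d\gamma_n$. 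Setting $k_i=1$ for all $i$ and inserting this constant into $(\star)$ yields (\ref{frenkel-amelioree}) without further work.

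To pass from (\ref{frenkel-amelioree}) to (\ref{frenkel}), I would apply Theorem \ref{t:magicintro} to each product over $d-1$ indices and bound $\int_{\R^n}\prod_{j\neq i}F_j^2\,d\gamma_n\geq\prod_{j\neq i}\int_{\R^n}F_j^2\,d\gamma_n$. Plugging this into (\ref{frenkel-amelioree}) produces $d$ identical copies of $\prod_{j=1}^d\int_{\R^n}F_j^2\,d\gamma_n$ on the right, which absorb the factor $1/d$ and deliver (\ref{frenkel}). The equivalence with Frenkel's estimate (\ref{e:f}) is then immediate, since every element of the first Wiener chaos is, up to rescaling, a real-valued centered Gaussian variable. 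I do not expect a serious obstacle: the whole argument is algebraic, and the only point requiring care is to verify that the constancy of $\|\nabla F_i\|^2$ in the first chaos genuinely turns $(\star)$ into a strengthening of Frenkel's inequality rather than just a reformulation of it.
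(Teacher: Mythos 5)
Your derivation of (\ref{frenkel-amelioree}) is exactly the paper's: both substitute $\mathcal{L}(F_i^2)=-2F_i^2+2\|\nabla F_i\|^2$ into (\ref{t=0}) and use that on the first chaos $\|\nabla F_i\|^2$ is the constant $\int_{\R^n}F_i^2\,d\gamma_n$. The only (harmless) difference is at the very end: you invoke Theorem \ref{t:magicintro} to bound the $(d-1)$-fold products, whereas the paper deduces (\ref{frenkel}) from (\ref{frenkel-amelioree}) alone by an immediate induction on $d$, which is marginally preferable since it exhibits (\ref{frenkel}) as a consequence of the refined inequality itself rather than of the general theorem.
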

\noindent
{\it Proof}. It suffices to show (\ref{frenkel-amelioree}), since inequality (\ref{frenkel}) can be obtained by an immediate induction argument. Writing once again $\mathcal{L}$ for the Ornstein-Uhlenbeck generator on $\R^n$, it is straightforward to check that $\mathcal{L}(F_i^2)=2\left(
\int_{\R^n}  F_i^2  d\gamma_n- F_i^2
\right)$. Plugging this into (\ref{negatif}) when $t=0$ (which corresponds to (\ref{t=0})), we deduce the desired conclusion.\qed

\subsection{Proof of Theorem \ref{t:hermite}}\label{ss:hermiteproof} Let $(G_1,..., G_d)$ be a real centered Gaussian vector as in the statement, with covariance $V = \{V(i,j) : i,j=1,...,d\}$. Since $V$ is positive semi-definite, one has that there exists a set of unit vectors $v_1,...,v_d\in \R^d$ such that $V(i,j) = \langle v_i , v_j \rangle$, $i,j=1,...,d$. As a consequence, one has that $(G_1,..., G_d)$ has the same distribution as $(\langle v_1, {\bf g}\rangle,..., \langle v_d, {\bf g}\rangle)$, where ${\bf g} \sim N(0, I_d)$. It is now a standard result that, since, for $i=1,...,d$, $\langle v_i, {\bf g}\rangle$ is a linear transformation of ${\bf g}$ with unit variance, then the mapping ${\bf x} \mapsto H_p(\langle v_i, {\bf x}\rangle):  \R^n \to \R $ defines an element of ${\rm Ker}(\mathcal{L}+p\,I)$ for every $p\geq 1$ (see e.g. \cite[Section 2.7.2]{np-book}), where $\mathcal{L}$ stands for the generator of the Ornstein-Uhlenbeck semigroup on $\R^n$. This shows in particular that \eqref{e:hgp} is a special case of \eqref{magic-conclusion}.

\subsection{Proof that Conjecture \ref{c:rp2} $\Longrightarrow$ Conjecture \ref{c:rp1}}\label{ss:conj}
Fix $d\geq 2$. Assume that Conjecture \ref{c:rp2} holds, and select unit vectors $x_1,...,x_d\in \R^d$. Denote by ${\bs \theta}$ a random variable uniformly distributed on the unit sphere $S^{d-1}$, and by $R^2$ a random variable having the $\chi^2(d)$ distribution, stochastically independent of ${\bs \theta}$. Then, according to Lemma \ref{l:radial}, the $d$-dimensional vector ${\bf g} := R{\bs \theta}$ has the standard nomal $N(0,I_d)$ distribution. It follows that
$$
(G_1,...,G_d) := (\langle {\bf g} , x_1\rangle , \ldots , \langle {\bf g} , x_d\rangle)
$$
is a $d$-dimensional Gaussian vector with covariance $E[G_iG_j] = \langle x_i,x_j\rangle$; in particular, $E[G_i^2]=1$, for every $i=1,...,d$.
 Now, for every integer $q\geq 1$,
\begin{eqnarray*}
&& \sup_{v\in S^{d-1}} | \langle v,x_1\rangle \cdots \langle v,x_d\rangle| \geq ( E[| \langle {\bs \theta},x_1\rangle \cdots \langle {\bs \theta},x_d\rangle|^{2q}])^{1/2q}\\
&& =\left( \frac{1}{E[R^{2dq}]}\right)^{1/2q} ( E[G_1^{2q}\cdots G_d^{2q}] )^{1/2q}\geq \left( \frac{1}{E[R^{2dq}]}\right)^{1/2q} E[G_1^{2q}]^{d/2q}\\
&& = \left( \frac{1}{E[R^{2dq}]}\right)^{1/2q} (2q-1)!!^{d/2q}\longrightarrow d^{-d/2}, \, \, \mbox{ as } q\to\infty,
\end{eqnarray*}
where the second inequality holds if Conjecture \ref{c:rp2} is true, and the last relation follows from an application of Lemma \ref{l:chi2} and Stirling's formula. This last fact shows in particular that $c_d(\mathcal{H})\leq d^{d/2}$, for every real Hilbert space $\mathcal{H}$. If in addition $\mathcal{H}$ is a real Hilbert space with dimension at least $d$, then one can select an orthonormal system $x_1,...,x_d \in \mathcal{H}$, in such a way that, for every $v\in S(\mathcal{H})$ (owing to the arithmetic/geometric  mean inequality)
$$
\left| \prod_{i=1}^d \langle v, x_i \rangle\right| \leq \frac{1}{d^{d/2}} \left(\sum_{i=1}^d \langle v, x_i \rangle^2\right)^{d/2} \leq \frac{1}{d^{d/2}},
$$
where the last estimate is a consequence of Parseval's identity. This yields immediately that $c_d(\mathcal{H})\geq d^{d/2}$, and the desired implication is proved.

\subsection{Proof of Theorem \ref{t:killpinasco}}\label{ss:pinocchio}

We keep the same notation and assumptions as in Section \ref{ss:pinintro}. We start
with a lower bound for $\mathcal{S}$.

\begin{thm}\label{Mino1}
Using again the notation $K=k_1+\ldots+k_d$, one has that
\begin{equation}\label{Mino1eq}
\mathcal{S}\ge \sqrt{\frac{\Gamma(\frac n 2 )}{\Gamma(K+\frac n 2)\,2^{K}}}.
\end{equation}
\end{thm}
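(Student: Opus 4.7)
The plan is to exploit the homogeneity of the $F_i$'s together with the polar decomposition of a standard Gaussian vector (Lemma \ref{l:radial}) in order to relate the $L^2(\gamma_n)$-norm of $\prod_{i=1}^d F_i^2$ (which is bounded below by $1$ thanks to Theorem \ref{t:magicintro}) to the spherical supremum $\mathcal{S}^2$. The $\chi^2$ moment formula of Lemma \ref{l:chi2} will then produce the explicit constant.

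More precisely, let $\mathbf{g} \sim N(0,I_n)$ and write $\mathbf{g} = R\boldsymbol{\theta}$ with $R^2 \sim \chi^2(n)$ independent of $\boldsymbol{\theta}$ uniform on $S^{n-1}$. Since $F_i$ is homogeneous of degree $k_i$, one has $F_i(R\boldsymbol{\theta})^2 = R^{2k_i} F_i(\boldsymbol{\theta})^2$, and therefore
\begin{equation*}
\int_{\R^n} \prod_{i=1}^d F_i^2 \, d\gamma_n \;=\; E\!\left[R^{2K}\right] \cdot E\!\left[\prod_{i=1}^d F_i(\boldsymbol{\theta})^2\right],
\end{equation*}
with $K = k_1+\cdots+k_d$. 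By Theorem \ref{t:magicintro} and the normalization $\int_{\R^n} F_i^2 \, d\gamma_n = 1$, the left-hand side is at least $1$, so
\begin{equation*}
E\!\left[\prod_{i=1}^d F_i(\boldsymbol{\theta})^2\right] \;\geq\; \frac{1}{E[R^{2K}]}.
\end{equation*}

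To finish, I would bound the left-hand side from above by the essential supremum over the sphere: since $\boldsymbol{\theta} \in S^{n-1}$ almost surely,
\begin{equation*}
E\!\left[\prod_{i=1}^d F_i(\boldsymbol{\theta})^2\right] \;\leq\; \sup_{\theta \in S^{n-1}} \prod_{i=1}^d F_i(\theta)^2 \;=\; \mathcal{S}^2.
\end{equation*}
Combining the two displays and invoking Lemma \ref{l:chi2} to evaluate $E[R^{2K}] = 2^K \Gamma(K+n/2)/\Gamma(n/2)$ yields
\begin{equation*}
\mathcal{S}^2 \;\geq\; \frac{\Gamma(n/2)}{2^K \, \Gamma(K+n/2)},
\end{equation*}
which is precisely \eqref{Mino1eq} after taking square roots.

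There is no real obstacle: the argument is a direct application of Theorem \ref{t:magicintro}, and the only nontrivial input beyond it is the elementary polar/radial decomposition. The mildly delicate point is simply keeping track of the exponent $2K$ appearing through homogeneity, which is exactly what makes the $\Gamma(K+n/2)/\Gamma(n/2)$ factor show up.
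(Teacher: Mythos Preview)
Your proposal is correct and follows essentially the same route as the paper's own proof: polar decomposition of the Gaussian vector, homogeneity to factor out $R^{2K}$, the main inequality \eqref{magic-conclusion} (Theorem \ref{t:magicintro}) to bound the Gaussian integral below by $1$, the trivial bound of the spherical expectation by $\mathcal{S}^2$, and Lemma \ref{l:chi2} for the $\chi^2$ moment. The only cosmetic difference is that the paper writes out the Gamma integral explicitly in the middle of the chain of inequalities rather than invoking Lemma \ref{l:chi2} at the end.
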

\begin{proof}
Let ${\bf g}\sim N(0,I_n)$. By virtue of Lemma \ref{l:radial}, one has that ${\bf g}\overset{\rm law}{=}R {\bs \theta}$, where
$R^2\sim \chi^2(n)$ and ${\bs \theta}$ is uniformly distributed on $S^{n-1}$ and independent of $R$.
Using the inequality (\ref{magic-conclusion}) for the first inequality, one can write
\begin{eqnarray*}
1&\leq& \int_{\R^n} \prod_{i=1}^d F_i^2 d\gamma_n= E\left[\prod_{i=1}^d F_i^2({\bf g})\right]\\
&=&E\left[R^{2K}\right]E\left[\prod_{i=1}^d F_i^2({\bs \theta})\right]\\
&=&\frac{2^{-n/2}}{\Gamma(n/2)}\,\int_0^\infty x^{K+n/2-1}\,e^{-x/2}dx\times E\left[\prod_{i=1}^d F_i^2({\bs \theta})\right]\\
&\leq&\frac{2^{-n/2}\mathcal{S}^2}{\Gamma(n/2)}\,\int_0^\infty x^{K+n/2-1}\,e^{-x/2}dx=2^{K}\mathcal{S}^2\,\frac{\Gamma(K+n/2)}{\Gamma(n/2)},
\end{eqnarray*}
and the claim (\ref{Mino1eq}) follows.
\end{proof}
The following statement, that is of independent interest, allows one to obtain a lower bound for $\mathcal{S}$ in term of the $\mathcal{S}_i$'s. .
\begin{prop}\label{minolemma}
For any eigenfunction $F$ of Ornstein-Uhlenbeck being an homogeneous polynomial of degree $k$, it holds
\begin{equation}\label{lemmetechnique}
\sup_{\textbf{u}\in S^{n-1}}|F(\textbf{u})|\le \frac{1}{\sqrt{k!}} \sqrt{\int_{\R^n} F^2 d\gamma_n}.
\end{equation}
\end{prop}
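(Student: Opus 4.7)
The plan is to associate to $F$ its coefficient tensor and apply a Cauchy--Schwarz inequality in tensor form, after using harmonicity to pin down the Gaussian $L^2$--norm. First, note that the hypotheses force $F$ to be harmonic: by Euler's identity $\langle x,\nabla F\rangle = kF$ for $F$ homogeneous of degree $k$, the equation $\mathcal{L}F=-kF$ collapses to $\Delta F=0$. Write $F$ uniquely in the form
$$F(x)=\sum_{i_1,\ldots,i_k=1}^n a_{i_1,\ldots,i_k}\,x_{i_1}\cdots x_{i_k},$$
where the tensor $a\in(\R^n)^{\odot k}$ is totally symmetric, and set $\|a\|^2=\sum_{i_1,\ldots,i_k}a_{i_1,\ldots,i_k}^2$.

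The first step is a one--line Cauchy--Schwarz. Viewing $F(u)$ as the tensor inner product $\langle a,u^{\otimes k}\rangle$, and noting $\|u^{\otimes k}\|=\|u\|^k=1$ for $u\in S^{n-1}$, one obtains
$$\sup_{u\in S^{n-1}}|F(u)|\le \|a\|.$$

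The main (and only non--trivial) step is the identity
$$\int_{\R^n} F^2\,d\gamma_n = k!\,\|a\|^2.$$
My preferred route is via Wick's formula: expanding $F^2$ gives a double sum, and $E[g_{i_1}\cdots g_{i_k}\,g_{j_1}\cdots g_{j_k}]$ decomposes, by Isserlis, as a sum over perfect matchings of the $2k$ positions. Matchings that cross the two halves completely (there are exactly $k!$ such matchings, indexed by permutations $\sigma\in\mathfrak S_k$) contribute, by symmetry of $a$, exactly $k!\,\|a\|^2$. Every other matching produces at least one pair inside the same half, which after summation yields a partial \emph{trace} of $a$, namely an expression involving $\sum_{i} a_{i,i,\,i_3,\ldots,i_k}$. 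But this contraction equals (up to combinatorial factors) the coefficient of a monomial in $\Delta F$, so harmonicity $\Delta F=0$ forces all such trace terms to vanish. Only the diagonal matchings survive, and the identity follows. A cleaner alternative, if one wishes to bypass the combinatorics, is to observe that for $F$ harmonic and homogeneous of degree $k$ the polynomial $F$ coincides with the multiple Wiener--It\^o integral $I_k(a)$ of its coefficient tensor, and invoke the chaos isometry $E[I_k(a)^2]=k!\,\|a\|^2$ recalled in Section \ref{ss:remintro}.

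Combining the two steps yields $|F(u)|\le \|a\|=(k!)^{-1/2}\sqrt{\int F^2 d\gamma_n}$ for every $u\in S^{n-1}$, which is precisely \eqref{lemmetechnique}. The principal obstacle is bookkeeping the Wick contractions and verifying the vanishing of all trace terms; the Wiener chaos identification of $F$ with $I_k(a)$ provides a shortcut that sidesteps this computation entirely.
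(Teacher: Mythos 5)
Your proof is correct, but it follows a genuinely different route from the paper's. You first observe, via Euler's identity, that the hypotheses force $\Delta F=0$, i.e.\ that the symmetric coefficient tensor $a$ of $F$ is traceless; you then split the claim into the pointwise bound $\sup_{S^{n-1}}|F|\le\|a\|$ (Cauchy--Schwarz in $(\R^n)^{\otimes k}$, using $\|u^{\otimes k}\|=\|u\|^k=1$) and the identity $\int_{\R^n}F^2\,d\gamma_n=k!\,\|a\|^2$. The latter is the only delicate point, and your justification is sound: in the Isserlis expansion every matching with a pair inside one of the two halves produces a self-contraction $\sum_j a_{j,j,i_3,\ldots,i_k}$, which vanishes identically because $\Delta F=k(k-1)\sum_{i_3,\ldots,i_k}\bigl(\sum_j a_{j,j,i_3,\ldots,i_k}\bigr)x_{i_3}\cdots x_{i_k}=0$; the $k!$ cross matchings each contribute $\|a\|^2$ by symmetry of $a$. (Equivalently, tracelessness is exactly what makes $\langle a,x^{\otimes k}\rangle$ coincide with $I_k(a)$, so the chaos isometry applies.) The paper instead argues by induction on $k$: integration by parts \eqref{e:ipp} gives the recursion $\int F^{2p}d\gamma_n=\frac{2p-1}{k}\int F^{2p-2}\|\nabla F\|^2d\gamma_n$, the radial decomposition $\mathbf{g}=R\boldsymbol{\theta}$ transfers this to moments on the sphere, the induction hypothesis is applied to the degree-$(k-1)$ eigenfunctions $\partial_iF$, and the telescoping product of \eqref{e:lemmecomplique} over $p=1,\ldots,q$ followed by $q\to\infty$ recovers the sup norm as a limit of $L^{2q}(S^{n-1})$ norms. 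Your argument is shorter and more structural (it isolates the sharp constant as a tensor Cauchy--Schwarz, and in fact shows the inequality is strict for $k\ge2$, since a traceless symmetric tensor cannot be a nonzero rank-one tensor $v^{\otimes k}$); the paper's stays entirely within the integration-by-parts/semigroup toolkit used throughout and yields the finite-$q$ moment estimates as a by-product. Both are complete proofs.
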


\begin{proof}
The proof is by induction on $k$. The case $k=1$ is immediate: we then have $F(x)=\langle a,x\rangle$ with $a\in\R^n$;
in particular, $\sup_{\textbf{u}\in S^{n-1}}|F(\textbf{u})|=\|a\|=\sqrt{E[F^2({\bf g})]}$, with ${\bf g}\sim N(0,I_n)$.

Now, assume the validity of (\ref{lemmetechnique}) for $k-1$ and let us prove it for $k$.
Let $F$ be an homogeneous polynomials of degree $k$ of $\R_n[x_1,\cdots,x_n]$ satisfying $\mathcal{L} F=-k F$. Using the integration by parts formula (\ref{e:ipp}), we have:
\begin{eqnarray*}
&&\int_{\R^n} F^{2p} d\gamma_n=\frac{2p-1}{k} \int_{\R^n} F^{2p-2}\| \nabla F\|^2 d\gamma_n.
\end{eqnarray*}
We use again the fact that, in view of Lemma \ref{l:radial}, if ${\bf g}\sim N(0,I_n)$, then ${\bf g}\overset{\rm law}{=}R{\bs \theta}$ with
$R^2\sim \chi^2(n)$ and ${\bs \theta}$ is uniformly distributed on $S^{n-1}$ and independent of $R$. Then,
\begin{eqnarray*}
E\big[R^{2pk}\big]E\big[F^{2p}({\bs \theta})\big]=\frac{2p-1}{k}E\big[R^{2pk-2}\big]
E\big[F^{2p-2}({\bs \theta})\|\nabla F({\bs \theta})\|^2\big].
\end{eqnarray*}
For all $i$, $\frac{\partial F}{\partial x_i}$ is an homogeneous polynomials of degree $k-1$ and satisfies $\mathcal{L}\frac{\partial F}{\partial x_i}=-(k-1) \frac{\partial F}{\partial x_i}$.
As a result, using the induction property for $k-1$,
\begin{eqnarray*}
E\big[F^{2p-2}({\bs \theta})\|\nabla F({\bs \theta})\|^2\big]&\leq& \sup_{\textbf{u}\in S^{n-1}}\|\nabla F(\textbf{u})\|^2
\times E\big[F^{2p-2}({\bs \theta})\big]\\
&\leq&\frac{1}{(k-1)!}E\big[\|\nabla F({\bf g})\|^2\big]\,E\big[F^{2p-2}({\bs \theta})\big]\\
&=&\frac{k}{(k-1)!}E\big[F({\bf g})^2\big]\,E\big[F^{2p-2}({\bs \theta})\big].
\end{eqnarray*}
Putting everything together yields
\begin{eqnarray}\label{e:lemmecomplique}
\nonumber
\frac{E\big[F^{2p}({\bs \theta})\big]}{E\big[F^{2p-2}({\bs \theta})\big]}&\le& \frac{2p-1}{(k-1)!}E\big[F({\bf g})^2\big]\times \frac{E\big[R^{2pk-2}\big]}{E\big[R^{2pk}\big]}\\
&=& \frac{2p-1}{(k-1)!(2pk+n-2)}\,E\big[F({\bf g})^2\big].
\end{eqnarray}
Taking the product for $p\in\{1,\ldots,q\}$ in (\ref{e:lemmecomplique}) yields:
\begin{eqnarray*}\label{e:complilemma}
E\big[F^{2q}({\bs \theta})\big]^\frac{1}{2q}&\le& \sqrt{ \frac{E\big[F({\bf g})^2\big]}{(k-1)!}}\times \prod_{p=1}^q\left(\frac{2p-1}{2pk+n-2}\right)^{\frac{1}{2q}}\\
&\le&\sqrt{ \frac{E\big[F({\bf g})^2\big]}{k!}}.
\end{eqnarray*}
Letting $q\to\infty$, we obtain that
$\sup_{\textbf{u}\in S^{n-1}}|F(\textbf{u})|\le \sqrt{ \frac{E\big[F({\bf g})^2\big]}{k!}}$
and the proof of the proposition is achieved by induction.
\end{proof}

Putting together the conclusions of Proposition \ref{minolemma} and Theorem \ref{Mino1} allows immediately to conclude the proof of Theorem \ref{t:killpinasco}.

\subsection{Proof of Theorem \ref{t:isonormal}}\label{ss:proofiso} Let $\{e_i : i=1,2,...\}$ be any orthonormal basis of $\mathcal{H}$, and write $$\mathcal{F}_n = \sigma(X(e_1),...,X(e_n)), \quad n\geq 1$$ (observe that the $X(e_i)$ are i.i.d. $N(0,1)$ random variables). For every $k\geq 1$, we denote by $\mathcal{H}(k,n)$ the subspace of $\mathcal{H}^{\odot k}$ generated by the canonical symmetrisation of the tensors of the type $e_{i_1}\otimes\cdots\otimes e_{i_k}$, where $1\leq i_1,...,i_k\leq n$. Then, for every $i=1,...,d$ and every $n\geq 1$, one hast that $E[I_{k_i}(f_i) \, |\, \mathcal{F}_n] = I_{k_i} (\pi_{k_i,n}(f))$, where $\pi_{k_i,n} : \mathcal{H}^{\odot k}\to \mathcal{H}(k,n)$ indicates the orthogonal projection operator onto $\mathcal{H}(k,n)$. It follows that: {\bf (i)} for every $n$, the conditional expectation $E[I_{k_i}(f_i) \, |\, \mathcal{F}_n]$ is an element of the $k_i$th Wiener chaos associated with $(X(e_1),...,X(e_n))$, and {\bf (ii)} one has the convergence $E[I_{k_i}(f_i) \, |\, \mathcal{F}_n] \to I_{k_i}(f_i)$ in $L^2(\sigma(X))$, and indeed in $L^p(\sigma(X))$, for every $p\geq 1$ --- owing to the well-known hypercontractivity of Wiener chaos (see e.g. \cite[Section 2.8.3]{np-book}). In view of Theorem \ref{t:magicintro}, fact {\bf (i)} implies that, for every $n\geq 1$
$$
E\left[\prod_{i=1}^d E[I_{k_i}(f_i) \, |\, \mathcal{F}_n]^2\right]\geq \prod_{i=1}^d E\left[ E[I_{k_i}(f_i) \, |\, \mathcal{F}_n]^2\right],
$$
whereas fact {\bf (ii)} yields that $E\left[\prod_{i=1}^d E[I_{k_i}(f_i) \, |\, \mathcal{F}_n]^2\right]\to E\left[\prod_{i=1}^d I_{k_i}(f_i)^2\right]$ and $E\left[ E[I_{k_i}(f_i) \, |\, \mathcal{F}_n]^2\right]\to E\left[ I_{k_i}(f_i)^2\right]$, thus completing the proof of Theorem \ref{t:isonormal}.

\subsection{Proof of Theorem \ref{u-statement}}\label{proof-u-statement}

We follow the ideas for the proof of the Ust\"unel-Zakai criterion of independence of multiple integrals \cite{UZ} developed by Kallenberg in \cite{K91}. To do so, it is easier to adopt the formalism introduced in Section \ref{ss:remintro} by choosing, for convenience,  $\mathcal{H}=L^2([0,1],dx)$. More specifically, in this section we assume without loss of generality that
\begin{equation*}
P_i(X)=\sum_{j=1}^m I_j(f_{i,j})^2,\quad i=1,\ldots,q,
\end{equation*}
for some integer $m\geq 1$ and some kernels $f_{i,j}\in \mathcal{H}^{\odot j}$, $i=1,\ldots,q$, $j=1,\ldots,m$.

The next lemma was stated in \cite{K91} without any justification. We prove it below for the sake of completeness, following an idea suggested to us by Jan Rosi\'nski {(personal communication)}.

\begin{lma}\label{quinousafaitchie}
Let $f\in \mathcal{H}^{\odot k}$ and define $H_f$ as the closed subspace of $\mathcal{H}$ spanned by all functions
$$\left\{t_k\mapsto \int _A f(t_1,\cdots,t_k) dt_1 \cdots dt_{k-1}\,\Big{|}\,A \in \mathcal{B}([0,1]^{k-1})\right\}.$$
Then $f\in H_f^{\odot k}$.
\end{lma}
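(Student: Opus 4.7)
The plan is to recast the claim $f\in H_f^{\odot k}$ as an orthogonality statement in $\mathcal{H}^{\otimes k}$, verify that orthogonality on a convenient spanning set, and then exploit the symmetry of $f$ to reduce to a single configuration. Write $K=H_f$ and decompose $\mathcal{H}=K\oplus K^\perp$. Distributing the tensor product gives
\[ \mathcal{H}^{\otimes k}=\bigoplus_{S\subseteq\{1,\ldots,k\}}\bigotimes_{i\in S}K\otimes\bigotimes_{i\notin S}K^\perp,\]
so that $(K^{\otimes k})^\perp$ is the closed span of the elementary tensors $h_1\otimes\cdots\otimes h_k$ in which each $h_i$ is in $K$ or $K^\perp$ and at least one factor $h_{i_0}$ lies in $K^\perp$. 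Since $f$ is symmetric, it will suffice to show that $\langle f,h_1\otimes\cdots\otimes h_k\rangle=0$ for every such tensor, for then $f\in K^{\otimes k}\cap \mathcal{H}^{\odot k}=K^{\odot k}$, which is exactly the conclusion.

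The key object is the bounded linear operator $T:\mathcal{H}^{\otimes(k-1)}\to\mathcal{H}$ defined by
\[(T\phi)(t_k)=\int_{[0,1]^{k-1}}f(t_1,\ldots,t_k)\,\phi(t_1,\ldots,t_{k-1})\,dt_1\cdots dt_{k-1}.\]
A direct application of Cauchy--Schwarz under the inner integral, followed by Fubini, gives $\|T\phi\|_\mathcal{H}\leq \|f\|_{\mathcal{H}^{\otimes k}}\,\|\phi\|_{\mathcal{H}^{\otimes(k-1)}}$, so $T$ is continuous. When $\phi=\mathbf{1}_A$ with $A\in\mathcal{B}([0,1]^{k-1})$, the function $T\phi$ is exactly one of the generators of $K$ listed in the statement; hence $T\phi\in K$ for every indicator, and by linearity the same holds for simple functions. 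Since simple functions are dense in $\mathcal{H}^{\otimes(k-1)}=L^2([0,1]^{k-1})$ and $K$ is closed, continuity of $T$ extends the inclusion to $T\phi\in K$ for every $\phi\in\mathcal{H}^{\otimes(k-1)}$.

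To finish, take any elementary tensor $h_1\otimes\cdots\otimes h_k$ with some $h_{i_0}\in K^\perp$. Using the symmetry of $f$, permute the factors so that $h_{i_0}$ occupies the last slot; equivalently, we may assume outright that $h_k\in K^\perp$. Then
\[\langle f,h_1\otimes\cdots\otimes h_k\rangle_{\mathcal{H}^{\otimes k}}=\langle T(h_1\otimes\cdots\otimes h_{k-1}),\,h_k\rangle_{\mathcal{H}}=0,\]
since the first argument lies in $K$ and the second in $K^\perp$. This establishes the required orthogonality and hence the lemma. The main subtle point is the density/continuity argument showing that $T$ actually maps all of $\mathcal{H}^{\otimes(k-1)}$ into $K$ (rather than merely into its enveloping subspace), because this is what allows the symmetry of $f$ to translate an arbitrary misplaced $K^\perp$-factor into a contradiction.
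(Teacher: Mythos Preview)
Your proof is correct and follows essentially the same route as the paper's: both arguments show that $f$ is orthogonal to any elementary tensor carrying a factor from $H_f^\perp$ by pushing that factor to the last slot via the symmetry of $f$ and then using that integrating $f$ against anything in the first $k-1$ variables lands in $H_f$. The only cosmetic difference is that the paper expands $f$ in an orthonormal basis of $H_f\oplus H_f^\perp$ and argues coefficientwise, whereas you package the same computation into the bounded operator $T$ and a density argument; the underlying mechanism is identical.
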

\begin{proof}
Since $H_f$ is a closed subset of $\mathcal{H}=L^2([0,1],dx)$, it admits an orthonormal basis, say, $(e_i)_{i\geq 1}$.
Let $(g_i)_{i\geq 1}$ be an orthonormal basis of $H_f^{\perp}$, so that $(h_i)_{i\ge 1}=(e_i)_{i\geq 1} \cup (g_i)_{i\geq 1}$
is an orthonormal basis of $\mathcal{H}$. Since the tensor products $h_{i_1}\otimes \ldots\otimes h_{i_k}$
form a complete orthonormal system in $\mathcal{H}^{\otimes k}$, we can write
\begin{equation}\label{decompof}
f=\sum_{i_1,\cdots,i_k=1}^\infty\langle f , h_{i_1}\otimes \cdots \otimes h_{i_k}\rangle h_{i_1}\otimes \cdots \otimes h_{i_k}.
\end{equation}
By the very definition of $H_f^{\perp}$, for any fixed $i\geq 1$ and $A\in\mathcal{B}([0,1]^{k-1})$, one obtains that
\begin{eqnarray*}
0&=&\int_{A {{\times [0,1]}} } f(t_1,\ldots,t_{k-1},{t_k})g_i(t_k)dt_1\ldots dt_k\\
 &=& \sum_{i_1,\cdots,i_{k-1}=1}\langle f , h_{i_1}\otimes \cdots \otimes h_{i_{k-1}}\otimes g_i\rangle \int_A h_{i_1}\otimes \cdots \otimes h_{i_{k-1}} dt_1\cdots dt_{k-1}.
 \end{eqnarray*}
The latter being valid for any $A\in\mathcal{B}([0,1]^{k-1})$, a standard density argument  implies that $\langle f , h_{i_1}\otimes \cdots \otimes h_{i_{k-1}}\otimes g_{i_k}\rangle=0$ for any $i_1,\cdots,i_{k} \geq 1$. Finally, it remains to use the symmetry of $f$ to deduce that $\langle f , h_{i_1}\otimes \cdots \otimes h_{i_k}\rangle=0$ if there exists $l$ such that $h_{i_l}\in (g_i)_{i\geq 1}$. This fact is then equivalent to $f\in H_f^{\otimes k}$ and, by symmetry again, $f \in H_f^{\odot k}$.

\end{proof}

We are now ready for the proof of Theorem \ref{u-statement}. Let us consider $1\leq i_1\neq i_2 \leq q$. By independence of $P_{i_1}(X)$ and $P_{i_2}(X)$ we have
\begin{eqnarray*}
0=\text{Cov}(P_{i_1}(X),P_{i_2}(X))
=\sum_{j_1,j_2=1}^m  \text{Cov}\left(I_{j_1}(f_{i_1,j_1})^2,I_{j_2}(f_{i_2,j_2})^2\right).
\end{eqnarray*}
Relying on inequality (\ref{magic-conclusion}) (with $d=2$), the right-hand side is nothing but a sum of nonnegative terms, which are hence all zero. The main result of \cite{RS} ensures that $I_{j_1}(f_{i_1,j_1})$ and $I_{j_2}(f_{i_2,j_2})$ are independent. Using the Ustünel-Zakai criterion \cite{UZ}, one deduces that $\int_0^1 f_{i_1,j_1}(t_1,\cdots,t_{k-1},t) f_{i_2,j_2}(t_1,\cdots,t_{k-1},t) dt=0$. In particular, the spaces $H_{f_{i_1,j_1}}$ and $H_{f_{i_2,j_2}}$ are orthogonal and
$$F_{i_1}=\text{Vect}\left( H_{f_{i_1,k}}; 1\le k \le m\right) \perp F_{i_2}=\text{Vect}\left( H_{f_{i_2,k}}; 1\le k \le m\right).$$
Now, let us take an orthonormal system $E_i$ in each space $F_i$ to obtain an orthonormal system of $\bigoplus_i F_i$, and let us complete it to obtain an orthonormal system of the whole space {$\mathcal{H}$}). Denote by $T$ the isometry transforming the canonical basis into this basis, and set $Y=T(X)$. Lemma \ref{quinousafaitchie} implies that $P_i(X)\in \R[I_1(g), g \in E_i]$. The proof is concluded by using the orthogonality of the spaces $F_i=\text{Vect}(E_i)$.

\section{A refinement of Hadamard's inequality}\label{s:had}

A fundamental result in matrix analysis is the so-called {\it Hadamard inequality}, stating that, if $S$ is $d\times d$ positive definite matrix, then ${\rm det}\, S \leq \prod_{i=1,...,d} S_{ii}$. See e.g. \cite[Theorem 7.8.1]{hj} for a standard presentation, or \cite{ct} for alternate proofs based on information theory.

Our aim in this section is to use our estimate \eqref{e:hgp} in order to deduce a refinement of Hadamard inequality, where a crucial role is played by squared Hermite polynomials, as they naturally appear when applying the well-known Mehler formula for Hermite polynomials, see \cite{foata}.

Note that the following statement includes the additional requirements that $Z<I_d$ and $Z+S<2I_d$, where $Z$ is the diagonal part of $S$. Of course, by rescaling one can always assume that $S$ verifies such a restriction, and obtain the general Hadamard's inequality by homogeneity.

\begin{thm}[Refined Hadamard inequality]\label{thm:hadamard}
Let $S=(S_{ij})$ be a symmetric positive definite matrix of size $d$. Write $I_d$ for the identity matrix of size $d$ and $Z$ for the diagonal part of $S$, that is, $Z={\rm Diag}(S_{ii})$. Assume $Z<I_d$
and $Z+S<2I_d$. Set
\[
\Sigma=I_d-\frac12(I_d-Z)^{-\frac12}(S-Z)(I_d-Z)^{-\frac12}.
\]
Then $\Sigma$ is symmetric, positive definite and satisfies $\Sigma_{ii}=1$ for each $i$.
Moreover, with $(X_1,\ldots,X_d)$ a centered Gaussian vector of covariance $\Sigma$,
\begin{equation}\label{hadamard}
{\rm det} S = \left(
\sum_{k_1,\ldots,k_d=0}^\infty \frac{E[H_{k_1}(X_1)^2\ldots H_{k_d}(X_d)^2]}{k_1!\ldots k_d!}\prod_{i=1}^d \sqrt{S_{ii}}(1-S_{ii})^{k_i}
\right)^{-2}.
\end{equation}
This implies in particular the classical Hadamard inequality: ${\rm det} \, S \leq \prod_{i=1}^d S_{ii}$.
\end{thm}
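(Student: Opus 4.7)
The plan is to recognise the series in \eqref{hadamard} as the value at $t_i=1-S_{ii}$ of the generating function
\[
F(t_1,\ldots,t_d):=\sum_{k_1,\ldots,k_d\geq 0}\frac{E[H_{k_1}(X_1)^2\cdots H_{k_d}(X_d)^2]}{k_1!\cdots k_d!}\prod_{i=1}^d t_i^{k_i},
\]
and to compute $F$ in closed form by combining Mehler's formula with the Gaussian moment generating function of a diagonal quadratic form. The preliminary assertions on $\Sigma$ are routine: symmetry is immediate; the diagonal entries of $(I_d-Z)^{-1/2}(S-Z)(I_d-Z)^{-1/2}$ vanish because $(I_d-Z)^{-1/2}$ is diagonal and $S-Z$ has zero diagonal, whence $\Sigma_{ii}=1$; and positive definiteness of $\Sigma$ follows by diagonal congruence from the hypothesis $Z+S<2I_d$.

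Since all summands are non-negative, Tonelli lets the sum be brought inside the expectation. Specialising the classical Mehler formula to $x=y$ gives, for $|t|<1$,
\[
\sum_{k\geq 0}\frac{H_k(x)^2}{k!}\,t^k=(1-t^2)^{-1/2}\exp\!\Bigl(\tfrac{t\,x^2}{1+t}\Bigr),
\]
so that $F(t_1,\ldots,t_d)=\prod_{i=1}^d(1-t_i^2)^{-1/2}\cdot E[\exp(\sum_i\lambda_i X_i^2)]$ with $\lambda_i:=t_i/(1+t_i)$, and the inner expectation equals $\det(I_d-2\Lambda\Sigma)^{-1/2}$ (with $\Lambda=\mathrm{Diag}(\lambda_i)$) as soon as $\Sigma^{-1}-2\Lambda>0$. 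The crux of the argument is the determinant identity obtained when $t_i=1-S_{ii}$: writing $D:=\mathrm{Diag}(2-S_{ii})$, noting that the diagonal matrices $\Lambda^{1/2}$ and $(I_d-Z)^{-1/2}$ commute and satisfy $\Lambda^{1/2}(I_d-Z)^{-1/2}=D^{-1/2}$, and invoking the cyclic identity $\det(I_d-2\Lambda\Sigma)=\det(I_d-2\Lambda^{1/2}\Sigma\Lambda^{1/2})$, a direct expansion gives
\[
I_d-2\Lambda^{1/2}\Sigma\Lambda^{1/2}=D^{-1/2}ZD^{-1/2}+D^{-1/2}(S-Z)D^{-1/2}=D^{-1/2}SD^{-1/2}.
\]
This simultaneously confirms the positivity condition $\Sigma^{-1}-2\Lambda>0$ (since $S>0$) and delivers $\det(I_d-2\Lambda\Sigma)=\det S/\prod_i(2-S_{ii})$. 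Since also $1-t_i^2=S_{ii}(2-S_{ii})$, the factors of $(2-S_{ii})$ cancel in $F$, yielding $F(1-S_{11},\ldots,1-S_{dd})=\prod_i S_{ii}^{-1/2}(\det S)^{-1/2}$; multiplying by the prefactor $\prod_i\sqrt{S_{ii}}$ in \eqref{hadamard} and taking the square reciprocal recovers $\det S$.

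The classical Hadamard bound $\det S\leq\prod_i S_{ii}$ then drops out immediately from Theorem \ref{t:hermite}: every summand in the series in \eqref{hadamard} is non-negative, and grouping the coordinates with $k_i\geq 1$ and applying \eqref{e:hgp} to the corresponding Gaussian subvector (which still has unit marginal variances) yields $E[\prod_i H_{k_i}(X_i)^2]\geq\prod_i k_i!$; the series is therefore at least $\prod_i\sqrt{S_{ii}}\sum_{k_i\geq 0}(1-S_{ii})^{k_i}=\prod_i S_{ii}^{-1/2}$, whose squared reciprocal is $\prod_i S_{ii}$. The main technical obstacle is the diagonal congruence reduction in the middle step: it must be carried out carefully enough to simultaneously identify $\det(I_d-2\Lambda\Sigma)$ with $\det S/\prod_i(2-S_{ii})$ and guarantee convergence of the underlying Gaussian integral. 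Once that identity is secured, the remaining manipulations are essentially bookkeeping.
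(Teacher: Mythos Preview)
Your proof is correct and follows essentially the same route as the paper's: Mehler's formula applied coordinatewise, the Gaussian moment generating function for a diagonal quadratic form, and a determinant identity linking $I_d-2\Lambda\Sigma$ to $S$, followed by the Hermite inequality \eqref{e:hgp} to recover Hadamard. The only cosmetic difference is in the determinant step---the paper rewrites $S=2I_d-Z-2(I_d-Z)^{1/2}\Sigma(I_d-Z)^{1/2}$ and factors, whereas you pass to $I_d-2\Lambda^{1/2}\Sigma\Lambda^{1/2}=D^{-1/2}SD^{-1/2}$ via a Sylvester-type congruence; both yield $\det(I_d-2\Lambda\Sigma)=\det S/\prod_i(2-S_{ii})$, and your version has the small bonus of making the positivity condition $\Sigma^{-1}-2\Lambda>0$ transparent.
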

\noindent
{\it Proof}.
Set $A=I_d-Z$ and $B=-\frac12 A^{-\frac12}(S-Z)A^{-\frac12}$, so that
$\Sigma=I_d-B$.
One has, since $S_{ii}=Z_{ii}$ by the very construction of $Z$,
\[
\Sigma_{ii}=1-\frac12\sum_{i=1}^d (1-Z_{ii})^{-\frac12}(S_{ii}-Z_{ii})(1-Z_{ii})^{-\frac12}=1.
\]
Moreover, the fact that $Z+S<2I_d$ implies
$A^{-\frac12}(S-Z)A^{-\frac12}<2I_d$ and so $\Sigma>0$. As a consequence of the celebrated Mehler formula for Hermite polynomials (see e.g. \cite{foata}), it is well-known that
\[
\sum_{k=0}^\infty \frac{H_k(x)^2}{k!}z^k = \frac{1}{\sqrt{1-z^2}}e^{\frac{zx^2}{z+1}}.
\]
We deduce
\begin{eqnarray*}
&&\sum_{k_1,\ldots,k_d=0}^\infty
\frac{E\left[H_{k_1}(X_1)^2\ldots H_{k_d}(X_d)^2\right]}{k_1!\ldots k_d!} (1-S_{11})^{k_1}\ldots (1-S_{dd})^{k_d}\\
&=&
E\left[e^{\frac{1-S_{11}}{2-S_{11}}X_1^2+\ldots+\frac{1-S_{dd}}{2-S_{dd}}X_d^2}\right]\prod_{i=1}^d \frac{1}{\sqrt{S_{ii}(2-S_{ii})}}\\
&=&\frac{1}{\sqrt{\det(I_d-2D\Sigma)}}\prod_{i=1}^d \frac{1}{\sqrt{S_{ii}(2-S_{ii})}},
\end{eqnarray*}
where $D$ stands for the diagonal matrix with entries $(1-S_{ii})/(2-S_{ii})$. Observe that
\[
S=2I_d-Z-2(I_d-Z)^{\frac12}\Sigma (I_d-Z)^\frac12.
\]
As a result,
\begin{eqnarray*}
\sqrt{\frac{1}{\det S}}&=&\sqrt{\prod_{i=1}^d \frac{1}{2-S_{ii}}\times \frac{1}{\det(I_d-2D\Sigma)}}\\
&=&\sum_{k_1,\ldots,k_d=0}^\infty
\frac{E\left[H_{k_1}(X_1)^2\ldots H_{k_d}(X_d)^2\right]}{k_1!\ldots k_d!} \prod_{i=1}^d \sqrt{S_{ii}}(1-S_{ii})^{k_i},
\end{eqnarray*}
and the desired conclusion (\ref{hadamard}) follows. Finally, we deduce from (\ref{e:hgp}) that
\[
E\left[H_{k_1}(X_1)^2\ldots H_{k_d}(X_d)^2\right]\geq k_1!\ldots k_d!.
\]
Combined with (\ref{hadamard}), this yields
\[
{\rm det} S \leq  \left(
\sum_{k_1,\ldots,k_d=0}^\infty \prod_{i=1}^d \sqrt{S_{ii}}(1-S_{ii})^{k_i}
\right)^{-2}=\prod_{i=1}^d S_{ii}.
\]
The proof of Theorem \ref{thm:hadamard} is complete.
\qed

\end{document}